\documentclass{article}

\usepackage{arxiv}

\usepackage[utf8]{inputenc} 
\usepackage[T1]{fontenc}    
\usepackage{hyperref}       
\usepackage{url}            
\usepackage{booktabs}       
\usepackage{amsfonts}       
\usepackage{nicefrac}       
\usepackage{microtype}      
\usepackage{lipsum}

\usepackage{amsmath,amsfonts,amssymb,amsthm}
\usepackage{graphicx,verbatim,longtable,etoolbox}
\usepackage{caption, subfig}
\usepackage[table,xcdraw]{xcolor}

\newcommand{\citea}{\cite}
\def\bm{\boldsymbol}
\def\Pl{\mathrm{Pl}}
\def\pl{\mathrm{pl}}


\DeclareMathOperator*{\argmax}{arg\,max}

%
%

\usepackage[greek,english]{babel}
\usepackage{newunicodechar}
\newunicodechar{ℝ}{\mathbb{R}}
\newunicodechar{θ}{\vartheta}
\newunicodechar{α}{\alpha}
\newunicodechar{β}{\beta}
\newunicodechar{Θ}{\Theta}
\newunicodechar{ψ}{\psi}
\newunicodechar{λ}{\lambda}
\newunicodechar{Π}{\Pi}
\newunicodechar{ε}{\epsilon}
\newunicodechar{σ}{\sigma}

\newunicodechar{∈}{\in}
\newunicodechar{⇒}{\Rightarrow}
\newunicodechar{→}{\to}
\newunicodechar{•}{\bullet}
\newunicodechar{≤}{\le}
\newunicodechar{≥}{\ge}
\newunicodechar{∀}{\forall}


\title{Exact model comparisons in the plausibility framework}
\rhead{\scshape Plausible model comparisons - \today}

\author{
	Stefan Böhringer$^*$\\
		Department of Biomedical Data Sciences\\
		Leiden University Medical Center\\
		Einthovenweg 20, 2333 ZC Leiden, The Netherlands\\
		\texttt{s.boehringer@lumc.nl} \\
		$^*$corresponding author\\
   \And
	Dietmar Lohmann\\
		Institut für Humangenetik\\
		Universitätsklinikum Essen\\
		Essen, Germany \\
		\texttt{dietmar.lohmann@uni-due.de} \\
}

\begin{document}

\maketitle


\begin{abstract}
Plausibility is a formalization of exact tests for parametric models and generalizes procedures such as Fisher's exact test. The resulting tests are based on cumulative probabilities of the probability density function and evaluate consistency with a parametric family while providing exact control of the $\alpha$ level for finite sample size. Model comparisons are inefficient in this approach. We generalize plausibility by incorporating weighing which allows to perform model comparisons. We show that one weighing scheme is asymptotically equivalent to the likelihood ratio test (LRT) and has finite sample guarantees for the test size under the null hypothesis unlike the LRT. We confirm theoretical properties in simulations that mimic the data set of our data application. We apply the method to a retinoblastoma data set and demonstrate a parent-of-origin effect.\par
Weighted plausibility also has applications in high-dimensional data analysis and P-values for penalized regression models can be derived. We demonstrate superior performance as compared to a data-splitting procedure in a simulation study. We apply weighted plausibility to a high-dimensional gene expression, case-control prostate cancer data set.\par
We discuss the flexibility of the approach by relating weighted plausibility to targeted learning, the bootstrap, and sparsity selection.
\end{abstract}

\keywords{plausibility, exact testing, parametric models, retinoblastoma, high-dimensional data, global testing, high-dimensional regression, penalized regression}

{\bf Declaration of interest:} The authors declare no conflict of intersts.\par
{\em CRediT author statement:} {\bf Stefan Böhringer:} Conceptualization, Methodology, Software, Data curation, Writing- Original draft preparation, {\bf Dietmar Lohmann:} Conceptualization, Investigation, Data curation, Writing- Reviewing and Editing.

\section{Introduction}

Exact inference has a long tradition in the statistical literature. Well known examples include Fisher's exact test \citea{fisher_interpretation_1922, agresti_survey_1992} and Clopper-Pearson intervals for the binomial distribution \citea{clopper_use_1934, agresti_approximate_1998}. Reasons for pursuing exact inference are warranted in cases when reliance on asymptotic properties of statistical procedures such as maximum-likelihood (ML) seem doubtful which include small sample size, complex models and skewed distributions. Plausibility is a framework that allows to compute exact P-values in a likelihood framework \citea{martin_plausibility_2015} and exploits the concept of using cumulative probabilities for statistical inference. Under a parametric model, the  cumulative probability of observed data maximized over the parametric family is considered and measures consistency of the data with this family. This can be interpreted as a goodness-of-fit statistic for this family. The statistic itself can be used as a P-value. Plausibility has been demonstrated theoretically and in simulations to have exact properties \citea{martin_plausibility_2015} and includes the examples mentioned above as special cases.\par
The focus on a single family of distributions can be a major limitation in some applications. For example, in a regression setting, the effect of a specific covariate might be of interest, controlled for a number of nuisance covariates. In this case, the plausibility statistic might have low power as it rejects against all alternatives deviating from the null distribution in contrast with a model comparison of two nested models. The motivating example for this study concerns retinoblastoma (RB) patients and implies research questions best answered with model comparisons. RB is a hereditary tumor syndrome, where a pre-existing variant allele increases the risk of tumor formation in the eye. A single variant allele segregates in a family and the risk increase due to this allele (penetrance) is of interest. The so-called Knudson model was a first statistical approach based on a Binomial model \citea{knudson_mutation_1971}. As extensions, the analysis of different effects of different mutations and the effect of the parental origin (parent-of-origin; POO) are of interest. Families can be small and exact inference seems prudent. In principle, these research questions can be analyzed using a goodness-of-fit approach. For example, using equal penetrances of RB families for our family of distributions should reject data coming from inhomogeneous families for sufficient sample size with high probability. On the other hand, this approach wastes power as the test would also reject alternatives which are not of interest, {\it e.g.} inhomogeneity within families. One major goal of this study is therefore to extend the plausibility framework with the possibility to perform model comparisons which focuses power on certain alternatives. This will be achieved by introducing a weighing scheme leading to weighted plausibility which puts probability mass on the model comparison rather than the goodness-of-fit.\par

Model comparisons in the plausibility framework are straightforward to extend to high-dimensio\-nal data analysis. Penalized regression is a widely used method for such data sets, however, the derivation of P-values is challenging. Data splitting is one possible approach, which selects variables in one part and estimates effect sizes in the second part \citea{wasserman_high_2009,meinshausen_p-values_2009}. We contrast this approach with a plausibility approach. The Bayesian interpretation of penalized regression allows to perform high-dimensional inference\citea{martin_plausibility_2015}. If only the alternative is high-dimensional, a strictly frequentist analysis is possible.\par

The paper is structured as follows: First, in section 2, we re-state the plausibility model and extend it with a weighing scheme that allows for model comparison. We show close kinship to likelihood procedures. Section 3 contains results for the normal model and introduces a global testing procedure. and demonstrates applications to high-dimensional data. In section 4, we apply the method to a retinoblastoma (RB) data set, and a high-dimensional prostate cancer data set. This section also contains simulations to systematically evaluate the procedures. Finally, we end with a discussion where we highlight some future directions, mention limitations and discuss the relationship with targeted learning. An appendix contains proofs.

\section{Methods}

\subsection{Plausibility functions}

We start this section by quickly reiterating important definitions and results from the plausibility framework. Results are taken from previous work unless stated otherwise\citea{martin_plausibility_2015}. We add an asymptotic result at the end of the section. We assume data $\mathbf{Y}$ to be sampled from a member of a parametric family of distributions $P_\theta$. First, we define statistic $T$ as

\begin{align*}
	T_{Y, \theta} = T_{Y, \theta, l} = \exp \{ -(l(Y, \theta) - c(Y)) \}.
\end{align*}

Here, $l$ is a loss function, in the following taken to be the negative log-likelihood, and $c(Y)$ is a normalizing term, usually taken to be $c(Y) = l(Y, \hat \theta)$ where $\hat \theta$ is the maximum likelihood estimator (MLE). The normalizing term $l(Y, \hat \theta)$ allows to develop the theory by guaranteeing that statistic $T_{Y, \theta}$ has support $[0, 1]$ for any $P_\theta, \theta \in \Theta$. However, this is non-essential. We also consider $c(Y) = 0$ later. The plausibility function is defined as:
$$
	\mathrm{pl}_Y(A) = \sup_{\theta \in A} F_\theta( T_{Y, \theta} ),
$$
where $F_\theta$ is the distribution function of $T_{Y, \theta}$. We call $\theta^* = \theta^*(T_{Y, \theta}) :=\arg\sup _{\theta \in A} F_\theta( T_{Y, \theta} )$ the plausibility estimate, when it exists.
To shorten notation, we define the distribution function of $\mathrm{pl}_Y(A)$ as
$
	\mathrm{Pl}_{Y,\theta}(\alpha) := \Pl_\theta(\alpha) := P_\theta(\pl_Y(A) \le \alpha)
$.
The reference to $Y$ is omitted when it is clear which plausibility function is used. We also abbreviate $\mathrm{pl}_Y(\theta) := \mathrm{pl}_Y(\{ \theta \})$ for $\theta \in \Theta$.

\newtheorem{theorem:larger}[theorems]{Theorem}
\begin{theorem:larger}[Martin 2015]\label{theorem:larger}
	Let $A \subset \Theta$. For any $\theta \in A$, $\alpha \in [0, 1]$, $Y \sim P_\theta$, $\mathrm{pl}_Y(A)$ is stochastically larger than uniform, {\it i.e.}
	$$
		\sup_{\theta \in A} \mathrm{Pl}_{\theta}(\alpha) \le \alpha
	$$
\end{theorem:larger}

We now assume that $Y = (Y_1, ..., Y_n)$ is an i.i.d. sample with $Y_1 \sim P_\theta$ and denote the plausibility function and its CDF with $\mathrm{pl}_n$ and $\mathrm{Pl}_n$, respectively, to indicate sample size. If $l$ does not have discontinuities, $\mathrm{pl}_n$ is uniformly distributed for all $n$ on $(0, 1)$ as shown previously. Otherwise convergence holds in distribution as $n \to \infty$ when the following uniqueness condition is met.

\newtheorem{definition:unique}[definition]{Definition}
\begin{definition:unique}\label{definition:unique}
	Likelihood $L(y ; \theta)$ has unique point masses if and only if for point mass $\alpha_m$, the set
	$Y_m := \{y \in \mathbb R^n | \pl_{y, \theta}(A) = \alpha_m\}$ only contains identical observations up to  exchangeability, {\it i.e.} observations only differ up to ordering $y, y' \in Y_m \Rightarrow
	(y_{(1)}, ..., y_{(n)}) = (y'_{(1)}, ..., y'_{(n)})$,
	where $x_{(i)}$ denotes the $i$th order statistic for vector $x = (x_1, ..., x_n)$.
\end{definition:unique}

\newtheorem{lemma:conv}[lemmas]{Lemma}
\begin{lemma:conv}\label{lemma:conv}
	If $L$ has unique point masses and under the other assumptions of the previous paragraph,$\mathrm{pl}_n(\theta)$ converges weakly to the standard uniform $U(0, 1)$.
\end{lemma:conv}

The proof is given in the appendix.

The restriction to unique point masses guarantees the uniqueness of the plausibility estimate and is theoretically strong but usually not strong in practice as in the following example. To illustrate the problems that might occur, consider a likelihood of i.i.d. Bernoulli variables for which the likelihood is modelled as $L(\theta; Y) = \prod_{i = 1}^N \theta^{Y_i} (1 - \theta)^{1 - Y_i}$. For $\theta = .5$, every data set has the same probability and therefore both $\theta = .5$ and the MLE $\theta = \hat \theta$ maximize the plausibility function for every data set. We call a value that maximizes the plausibility for every data set a non-plausible value. The uniqueness condition is usually not fulfilled for most discrete distributions. For example, the binomial distribution with parameter 0.5 would be non-unique due to symmetry around 0.5. Convergence to the uniform would still hold as non-uniqueness would be restricted to to a small subset of events ({\it i.e.} a pair, see appendix \ref{app:nonplausible}). We do not try to optimally characterize conditions on the likelihood to guarantee unique estimates. Instead, we see lemma \ref{lemma:conv} as a guiding principle. For example, a solution for the Bernoulli example is to add the binomial coefficient to the likelihood which guarantees unique estimates (up to symmetries). More details are given in the appendix (section \ref{app:nonplausible}).

\subsection{Plausible model comparisons}\label{section:comparison}

We prepare model comparison by considering a real-valued, measurable function $w$ that acts on realizations of $Y$. We assume $w: \mathbb{Y} \to \mathbb{R}$ to be free of $\theta$. We first observe that when defining $T$, we can construct plausibility functions based on $w(Y)$ by replacing the exponentiated loss function $exp \circ -l$ by $w$ to get
$T^w_{y, \theta} := w(y)/c^w(y)$. If normalization is desired $c^w(y)$ is taken to be $c^w(y) = \sup_{y}w(y)$, or 1 otherwise. The distribution function of $T^w$ can then be written as
$F_{\theta}(t) = P_\theta(\{y|w(y) \le t\})$, 
which induces $\pl^w_y(A) = \sup_{\theta \in A} F_\theta( T^w_{y, \theta} )$ and $\Pl^w_{\theta}(\alpha) = P_\theta(\pl^w_y(A) < \alpha)$. If $w$ is bijective, a strict order is imposed on events and the CDF is calculated under this ordering.\par
This concept generalizes standard or unweighted plausibility introduced above, as $w(y) := pl_y(A)$ leads to $\pl_y^w(A) = \pl_y(A)$ (see appendix \ref{app:specialcase}). Note that $w$ is a fixed function independent of $\theta$ as it does not depend on data $Y$, {\it i.e.} it can be pre-computed for each realization $y$ without seeing the data. This definition merely serves to show the generalization, it is not helpful for performing the computations in unweighted plausibility.

\newtheorem{lemma:W}[lemmas]{Lemma}
\begin{lemma:W}\label{lemma:W}
	With the notation from the previous paragraph, let $\theta \in \Theta, A \subset \Theta$ and $w$ some test statistic $w: \mathbb{Y} \to \mathbb{R}$ which is free of $\theta$. Then,
	$$
		\sup_{\theta \in A} \Pl^w_{\theta}(\alpha) \le \alpha,
	$$
	{\it i.e.} $\pl^w_Y(A)$ is stochastically larger than uniform, with $\Pl^w_{\theta}(\alpha) = P_\theta( F_\theta( T^w_{y, \theta} ) \le \alpha)$. 
\end{lemma:W}
\begin{proof}
	By definition of $\Pl^w_{\theta}$,
	\begin{align*}
		\Pl^w_{\theta}(\alpha)
			& = P_\theta(\sup_{\theta \in A} F_\theta( T^w_{Y, \theta} ) \le \alpha)\\
			& \le P_\theta(F_\theta( T^w_{Y, \theta} ) \le \alpha).
	\end{align*}
	As $F_\theta$ is the distribution function of $T^w_{Y, \theta}$,  $P_\theta(F_\theta( T^w_{Y, \theta} ) \le \alpha) \le \alpha$ by definition. Supremizing over $\theta$ completes the proof.
\end{proof}

This proof implies Theorem \ref{theorem:larger} as a special case. We now consider model comparisons. The idea is to choose a weighing function $w$ such that a model comparison is performed. Let the null hypothesis be represented by $\Theta_0 \subset \Theta$ and the alternative by $\Theta_1 \subset \Theta$ with $\Theta_0 \subset \Theta_1$, {\it i.e.} the situation of nested model comparisons. We now define a weighing function $w(y) = \sup_{\theta \in \Theta_0} l(y, \theta) - \sup_{\theta \in \Theta_1} l(y, \theta) =: l(y, \hat \theta_0) - l(y, \hat \theta_1) $.

\newtheorem{remark:isTest}[remarks]{Remark}
\begin{remark:isTest}\label{remark:isTest}
	With the notation above, $w(y) = l(y, \hat \theta_0) - l(y, \hat \theta_1)$,
	$$
		U := \pl_{Y}^w(\Theta_0)
	$$
	defines a testing procedure with rejection region $[0, \alpha)$ for alpha level $\alpha$.
\end{remark:isTest}

For discrete distributions, $U$ is a cumulative sum of probabilities. In these cases, the cumulative sum proceeds by summing event probabilities that are likely under the alternative but whose probabilities are evaluated under the null hypothesis. Intuitively, $U$ will therefore be more likely to reject the null if the observation was indeed drawn under the alternative. To formally characterize $U$, we now show that it is asymptotically equivalent to the likelihood ratio (LR) test for the same model comparison. $U$ can therefore be considered an exact version of the LR test. We base our argument on the comparison of rejection regions of the LR and weighted plausibility tests.

\newtheorem{theorem2}[theorems]{Theorem}
\begin{theorem2}\label{theorem:lrEquiv}
	Let $\theta \in \Theta_0$ be fixed and known, $\alpha \in (0, 1)$. For $w(y) := l(y, \hat \theta_0)/ l(y, \hat \theta_1)$,
	The rejection region for $U = \pl_{Y}^w(\Theta_0),$ is asymptotically identical to the rejection region of the LR-test, {\it i.e.} the probability mass of the symmetric difference between the rejection regions converges to 0 in probability.
\end{theorem2}

We give the proof in the appendix. From this asymptotic equivalence, some properties of the LR procedure are inherited by the weighted plausibility test.

\newtheorem{corollary:efficient}[corollaries]{Corollary}
\begin{corollary:efficient}\label{corollary:efficient}
	In the one-parameter situation, {\it i.e.} $\dim(\theta) = 1$, the test $U = \pl^w_{Y}(\Theta_0)$ is asymptotically efficient.
\end{corollary:efficient}

In the next section, we deal with the problem of constructing confidence regions for the unrestricted parameter $\theta \in \Theta_1$.

\subsection{Marginal Plausibility Functions}

In the context of nested model comparisons, often it is possible to express the null and alternative hypotheses by splitting the parameter vector $\theta = (\psi, \lambda) \in \Psi \times \Lambda = \Theta$ and constraining $\psi$ to a subset $\Theta_0^\psi \subset \Theta$ under the null, while leaving $\lambda$ free. $\lambda$ can be seen as a nuisance parameter. In this context, it is interesting to consider the relative profile likelihood to obtain

\begin{align*}
	T_{Y, \psi} = L_Y( (\psi, \hat \lambda(\psi)) )/c(Y),
\end{align*}

which allows to define the so-called marginal plausibility function:

\begin{align*}
	\mathrm{mpl}_Y(A) = \sup_{\psi \in A} F_\psi(T_{Y, \psi}).
\end{align*}

In principle, it is possible to base inference on the plausibility region for $\psi$:

\begin{align*}
	\Pi_Y(\psi) := \{ \psi : \mathrm{mpl}_Y(\psi) > \alpha \}.
\end{align*}

However, in order to be exact, the distribution of $T_{Y, \psi}$ has to be free of $\lambda$. This means that $T_{Y, \psi}$ has to be an ancillary statistics of $\lambda$ which is a strong limitation in practice. The reason is that  $T_{Y, \psi}$ will be evaluated in $(\psi, \hat \lambda(\psi))$ instead of the true $(\psi, \lambda(\psi))$. To make coverage exact, the distribution of $\hat \lambda(\psi)$ would have to be known, which is difficult in practice.\par

To prepare an alternative approach, we first introduce the profile plausibility which is constructed analogously to the profile likelihood. If $\theta$ is partitioned as above and $A = \{ \psi \} \times A_\lambda$ for some $\psi \in \Psi$, we call $\mathrm{ppl}_Y(\psi) = \sup_{\lambda \in A_\lambda} F_{(\psi, \lambda)}(T_{Y, (\psi, \lambda)})$ the profile plausibility function of likelihood $L_Y((\psi, \lambda))$ for $\psi$.

\newtheorem{remark:profilePlausibility}[remarks]{Remark}
\begin{remark:profilePlausibility}\label{remark:profilePlausibility}
	For likelihood $L_Y((\psi, \lambda))$, $A = \{(\psi, \lambda) \in \Psi \times \Lambda \}$, the profile plausibility estimate $(\psi^*, \lambda^*)$ is given by $(\arg \sup_\psi \mathrm{ppl}_Y(\psi), \arg \sup_\lambda T'_{Y, \lambda})$, with $T'_{y, \lambda} := T_{Y, (\psi^*, \lambda)}$.
\end{remark:profilePlausibility}


We now use the weighted plausibility framework to construct an alternative marginal plausibility function. The weighing function is parametrized by $\psi$ and statistic $T$ therefore becomes a function of $\psi$ and $\lambda$. We use the following weighing function:

\begin{align*}
	w(\psi, y) = w_y(\psi) = l(y, (\psi, \lambda^*)) - l(y, (\psi^*, \lambda^*)),
\end{align*}

where $(\psi^*, \lambda^*)$ is the plausibility estimate for data $y$. For $T^w_{Y, \psi; \lambda}$ we use the weighted profile plausibility which is defined as above by using $T^w$ instead of $T$ and denoted with $\mathrm{ppl}^w_Y(\psi)$. To emphasize that $w$ is independent from any data generating parameter, we use $(\psi^\bullet, \lambda^\bullet)$ for data generating parameters and $\psi, \lambda$ as function parameters.


We now use the profile plausibiliity function for $\psi$ to construct a weighted marginal plausibility region.

\begin{align}\label{form:marginal}
	\mathrm{mpl}^{w}_Y(A) = \sup_{\psi  \in A} \mathrm{ppl}^w_Y(\psi).
\end{align}

This leads to the weighted marginal plausibility region:

\begin{align}\label{form:marginal}
	\Pi^{w}_Y(\alpha) = \{ \psi \, : \, \mathrm{ppl}^w_Y(\psi) > \alpha \}.
\end{align}

With weighing function $w$ defined as above, the following lemma holds.

\newtheorem{lemma:marginal}[lemmas]{Lemma}
\begin{lemma:marginal}\label{lemma:marginal}
	The coverage probability of the weighted marginal likelihood is nominal for $\psi$, {\it i.e.} for $\alpha \in (0, 1), \theta^\bullet = ( \psi^\bullet, \lambda^\bullet )$, 
	\begin{align*}
		P_{\theta^\bullet}(
			 \Pi^{w}_Y(\alpha) \times \{ \lambda^\bullet\} \ni \theta^\bullet
		) \ge 1 - \alpha.
	\end{align*}
\end{lemma:marginal}
\begin{proof}
	We here give an informal argument and refer to the appendix for the formal proof. For each fixed $\psi$, the weighted marginal plausibility can be interpreted as a weighted plausibility function and is therefore statistically larger than uniform. The special form of the weighting function, which is the likelihood ratio evaluated in the plausibility estimates of the free parameters, guarantees that a value of $\psi$ is selected into the plausibility region if the observed data belongs to the probability $\alpha$ events close to the plausibility estimate. This argument is analogous to the equivalence of constructing confidence intervals and testing a point null-hypothesis.

\end{proof}


\section{The normal model}

Plausibility can be applied to high-dimensional data ($N < p$), {\it i.e.} data for which the number of predictors ($p$) exceeds that of observations ($N$). We first give a motivation by linear models and then introduce penalized models. For a linear model with data $\mathbf Y$, we assume a fixed design with design matrix $\mathbf X$ so that $\mathbf Y = \mathbf X \beta + \mathbf \epsilon$, with $\mathbf \epsilon = (\epsilon_1, ..., \epsilon_N),\,\, \epsilon_i \,  \stackrel{iid}{\sim}  \, N(0, \sigma^2)$. We take the estimate $\hat \theta$ for $(\beta, \sigma^2)$ as the ML-estimate, where the variance estimate is bias-corrected. The plausibility estimate of the parameter vector $\theta = (\beta, \sigma^2) \in A$ can then be found by:

$$
	\theta^* = (\beta^*, \sigma^{2*}) = \arg \sup_{\theta \in A} F_{(\beta, \sigma^2)}(T_{Y, (\beta, \sigma^2)}).
$$

\newtheorem{remark:nonPlausible}[remarks]{Remark}
\begin{remark:nonPlausible}\label{remark:nonPlausible}
	In the linear model above, the plausibility estimate does not exist for $A = \mathbb R^{p+1} \times \mathbb R^+$.
\end{remark:nonPlausible}

The remark follows from $F_{(\beta, \sigma^2)}(T_{y, (\beta, \sigma^2)}) \to 1$ for $\sigma^2 \to \infty$ and any fixed $\beta$ which in turn is due to the fact that any tail probability of the  normal distribution converges to 1 for increasing variance. We call such parameters non-plausible.\\
One possible approach to non-plausible parameters is to plug in an estimate of such parameters based on fixing the other parameters. For example, in the case of the linear model the unbiased estimate of residuals can be used ($\hat \sigma^2(\beta)$). We call a plausibility function based on such an estimate a ML profile-plausibility to indicate that a part of a partitioned parameter vector is fixed at the ML-estimate.

\newtheorem{lemma:profileML}[lemmas]{Lemma}
\begin{lemma:profileML}\label{lemma:profileML}
	In the linear model above, the profile-plausibility estimate for $\beta$ using the unbiased variance estimator for $\sigma^2$ coincides with the ML estimate, {\it i.e.} $\beta^* = \hat \beta$.
\end{lemma:profileML}
The proof uses elementary calculations and is given in the appendix.

\newtheorem{remark:degenerate1}[remarks]{Remark}
\begin{remark:degenerate1}\label{remark:degenerate1}
	In the linear model above, the profile-plausibility function is degenerate, {\it i.e.}
	$$\pl_{\mathbf Y}(A) = 1 \,\, \forall\, \mathbf Y,$$
	with $ A = \mathbb R^{p+1}$.
\end{remark:degenerate1}

The statement is due to the fact that each data set has a likelihood of $ (2 \pi \hat \sigma^2)^{-\frac{n}{2}} \exp(\frac{n - 1}{2}) $ when evaluated in $\hat \theta$. All potential data with different estimates, have lower likelihood as compared to the observed data.
We note, that conditional on any estimate $\theta^* = (\beta^*, \hat \sigma^2)$, data is uniformly distributed on the $S^{N - 1} \hat \sigma + \mathbf{X} \beta^*$ sphere. If data is standardized first, the uniformity is on $S^{N - 1}$ directly. In most situations the scale of the variable is not of interest or even arbitrary. In these cases it is justified to use the conditional distribution $\mathbf Y|\hat \sigma^2$ as the null distribution in the plausibility model, {\it i.e.} the density is that of a multivariate normal distribution with a fixed independent covariance matrix and the diagonal fixed at $\hat \sigma^2$. If we call the corresponding weighted profile-plausibility function $\pl^{w, \hat \sigma^2}_{Y}(\Theta_0)$, this observation motivates the following lemma, using notation from lemma \ref{lemma:W}. We have added $\hat \sigma^2$ in the superscript to emphasize that we use a ML profile-plausibility, otherwise it is a standard weighted plausibility function.

\newtheorem{lemma:profile}[lemmas]{Lemma}
\begin{lemma:profile}\label{lemma:profile} With the notational conventions from lemma \ref{lemma:W} and the paragraphs above,
	$
		\pl^{w, \hat \sigma^2}_{Y}(\Theta_0)
	$
	is stochastically larger than uniform, with $\Theta_0 = \{ \beta = 0 \}$. 
\end{lemma:profile}
Intuitively, the lemma follows from the fact that, conditional on the profile-plausibility estimate, data is uniformly distributed. The proof is given in the appendix.  Draws from this distribution for any $\beta$ can be made as an iid sample from an arbitrary normal distribution after which the sample is re-standardized. The same draw can be re-standardized to new paramter values, which is important when stochastic integration is used (see below).\par

\subsection{Global test}
To motivate a testing procedure, we first assume that $\mathbf Y$ has known variance $\sigma^2$. If we are interested in a global test without nuisance covariates, the null hypothesis of interest is $\beta = 0$ against $\beta \ne 0$. If the alternative $\beta_a \ne 0$ is known, the likelihood ratio (LR) test is defined as

\begin{align*}
	\Lambda = \prod_i\varphi(Y_i; (\beta_a^T \mathbf X)_i, \sigma^2)/\prod_i\varphi(Y_i; 0, \sigma^2),
\end{align*}

where $\varphi(\cdot ; \mu, \sigma^2)$ is the density of the normal distribution with parameters $(\mu, \sigma^2)$. $\Lambda > c$ is a uniformly most powerful test due to the Neyman-Pearson lemma for appropriate $c$. This property of the LR motivates the use of the following weighing statistic:

\begin{align}
	\Lambda(\mathbf X, \mathbf Y) = \prod_i\varphi(Y_i; (\hat \beta_{a0}^T \mathbf X_0  + \hat \beta_a^T \mathbf X_a)_i, \sigma^2)/\prod_i\varphi(Y_i; (\hat \beta_0^T \mathbf X_0)_i, \sigma^2),\label{formula:lr-highdim}
\end{align}

where $\mathbf X$ partitions into nuisance covariates and predictors $\mathbf X = (\mathbf X_0, \mathbf X_a)$. $(\hat \beta_0)$ denotes the the MLE under the null and $(\hat \beta_{a0}, \hat \beta_a)$ is the MLE under the alternative partitioned by the covariate components.

\subsection{High-dimensional data}\label{section:high-dim}

We now consider the situation where $N < p$, otherwise keeping the linear model from the previous section. First, we investigate the problem of testing the global null hypothesis $\beta = 0$ for the model $\mathbf Y = \mathbf X \beta + \mathbf \epsilon$ as introduced above, where we assume $\mathbf Y$ to be centered to justify $\beta = 0$. As the problem is ill-posed, one solution is to use penalized regression for the estimation of $\hat \beta$. As $\hat \beta$ is no longer a ML-estimate, standard likelihood theory no longer applies and the distribution of $\hat \beta$ has to be recovered by additional steps. One approach is to use data-splitting as reviewed in the introduction. We will use one implementation of data splitting to compare to a plausibility comparison \citea{yuan_lol:_2019}. For our weighted plausibility approach, we use the same weighting function (\ref{formula:lr-highdim}) as above where we plug in penalized estimates. We consider the Lasso \citea{tibshirani_regression_1996}, elastic net \citea{zou_regularization_2017}, and Ridge penalties \citea{hoerl_ridge_1970} as implemented in {\it glmnet} \citea{friedman_regularization_2010}. Under certain conditions, the penalized estimates converge to the true parameter values for a limiting process for which both $N$ and $p$ tend to infinity (see references given in \citea{friedman_elements_2001}). In the finite samples situtation, the LRT statistic using penalized estimates needs to separate the models well to be a useful weighing function in the plausbility approach. It is difficult to attain theoretical guarantees. In this paper, we rely on simulations to investigate properties of this approach.

\subsection{Evaluation of the plausibility function}\label{section:high-dim}

In almost all cases it is not possible to develop a closed-form formula for the plausibility function and numeric evaluation is required. Discrete distributions might allow exact evaluation for small data sets. In general, stochastic integration is needed.

To evaluate the plausibility function for point null-hypothesis $θ_0$, random draws of $ Y^{(j)}$ are drawn from the null model $ Y^{(j)} \sim  F_{θ_0}$. The plausibility function is then approximated by

\begin{align}
	\pl^{w}_{Y}(θ_0)
		\approx
		\frac{1}{M} \sum_j^M I\{w(Y) > w(Y^{(j)})\},
	\label{formula:stochastic-integration}
\end{align}

for $M$ approximation samples. In general, the supremum has to be taken over the full set $Θ_0$. In principle, for each evaluation $\theta \in \Theta_0$ a new sample can be taken. However, this adds additional sampling variation which can dominate function variation close to the supremum. To avoid this problem, the samples $(Y^{(j)})_j$ drawn from an initial $\theta_0$ is reused as follows:

\begin{align}
	T^w_{Y, \theta}
		\approx
		\frac{1}{M} \sum_j^M
			\frac{ L(Y, \theta)}{ L(Y, \theta_0) }
			I\{ w(Y) > w(Y^{(j)}) \}.
	\label{formula:stochastic-integration}
\end{align}

This can be seen as the evaluation of an integral using importance sampling. $F_{\theta_0}$ takes the role of the proposal distribution to evaluate $F_{\theta}$. This approximation is justified by re-writing $T^w_{Y, \theta}$ as follows:

\begin{align*}
	F_\theta(T^w_{Y, \theta})
	    = & \int 1_{A(Y)} dP_{\theta}
	    = \int_{A(Y)} dP_{\theta}
		= \int_{A(Y)} L(y, \theta)dy \\
		= & \int_{A(Y)} \frac{L(y, \theta)}{L(y, \theta_0)} L(y, \theta_0) dy
		=   \int \frac{L(Y, \theta)}{L(Y, \theta_0)} I\{ w(y) > w(Y) \} dP_{\theta_0},
\end{align*}

where $A(Y) = \{ y | w(y) < w(Y) \}$ and $1_X$ is the characteristic function of $X$.
We use a grid search around the MLE to find the supremum. Using R-function $\mathrm{optim}$ usually also works reliably. After finding the optimum, new samples can be drawn from the optimum and the procedure can be iterated. In practice, this turns out not to be necessary and for the analyses and simulations in this paper the optimization has not been iterated.\par
For the plausibility region, a grid search is performed to bracket the $\alpha$ level-set of the plausibility region. In our implementation, this search has exponential running time in the number of parameters as we consider a joint grid for all parameters centered around the MLE or plausibility estimate. The efficient construction of plausibility regions for many parameters needs to be addressed in future research.

\section{Data Examples}

\subsection{Retinoblastoma}

RB is a childhood tumor of the eye that follows a dominant inheritance pattern \citea{knudson_mutation_1971}. The disease has given rise to the so called two-hit hypothesis: a tumor suppressor gene needs to acquire two mutations to inactivate both copies available on autosomal chromosomes. In familial cases, one variant copy is inherited and only a second mutation is necessary to initiate tumor formation. If the probability for this second hit is high, most individuals inheriting the first mutation will develop a tumor and disease appears to be dominant, as expressed in the penetrance of the disease (disease probability, given presence of first mutation). In RB, families with reduced penetrance are known and one question is whether characteristics of the first variant introduced by a mutation in parents can explain this variation. As a second important question, RB1 gene has been shown to be imprinted at least in some constitutional cells, meaning that only one parental copy is preferentially active in these cells. This can lead to allelic imbalance of expression in cells showing RB imprinting. However, this has not been shown for the putative precursor cells of retinoblastoma as yet \citea{bremner_cancer:_2014}. A statistical analysis can help clarifying this question by analyzing the effect of parental origin on disease penetrance.

\subsubsection{The Knudson model}

Let $Y_i \in \{0, 1, 2\}$ denote the number of affected eyes in individual $i = 1, ..., N$, $Y = (Y_1, ..., Y_N)$. We assume

\begin{align*}
	Y_i & = I\{X_{il} > 1\} + I\{X_{ir} > 1\},\\
	X_{ij}\ \mathrm{iid} & \sim \mathrm{Poisson} (\lambda), i = 1, ..., N, j \in \{l, r\},
\end{align*}

where $X_{ij}$ is the number of tumors that individual $i$ has in eye $j$. We assume that the number of tumors is not known, only the presence of tumors is recorded, making $Y_i$ the sum of two indicator variables. Since all $X_{ij}$ are considered independent, $Y_i$ is a binomial $Y_i \sim \mathrm{Binom}(2, p)$ with MLE
$$
	\hat{p} = \frac{1}{2N} \sum_i Y_i = \frac{N_U + 2N_B}{2N},
$$
where $N_U$ and $N_B$ are the number of unilaterally and bilaterally affected individuals, respectively. As we have the relationship $\lambda = - \log(1 - p)$, we get
$
	\hat{\lambda} = - \log (1 - \hat p)
$

$\lambda$ is the average tumor count per eye. If measured per individual (as in the Knudson paper), we re-parametrize as $\lambda_I = 2 \lambda$ (called $m$ in the Knudson paper) and get the eye-distribution $(p_N, p_U, p_B) = ( (1 - p)^2, 2 p (1 - p), p^2)$ (none, unilateral, bilateral), with $p = 1 - \exp(- \lambda_I/2)$. 

To model covariates, we use a logistic model for $p$. For individual $i$, we define disease probability $p_i$ as follows:
$$
	\mathrm{logit} (p_i) = \bm \beta^T \bm{x_i},
$$
where $\bm x_i$ is the covariate vector of individual $i$ and $\bm \beta$ is the vector of regression coefficients. We assume $x_{i1} = 1$ for an intercept model. In our context, relevant covariates are family membership as a proxy for variant type and parental origin of the variant allele. An example pedigree is shown in figure \ref{fig:pedigree}. Note, that families are ascertained, {\it i.e.} at least one member is affected by RB. This fact can be modeled by an ascertainment correction in the likelihood. Founders, {\it i.e.} individuals without parents in the pedigree, are ignored as they may have acquired the mutation and, if so, may have it present in a mosaic state, {\it i.e.} the variant allele would be present in only part of the cells of the body. In our notation, we assume that founders have already been removed, {\it i.e.} $N$ represents the effective number of individuals in the pedigree.

\begin{figure}
	\begin{center}
	\includegraphics[height=.5\hsize]{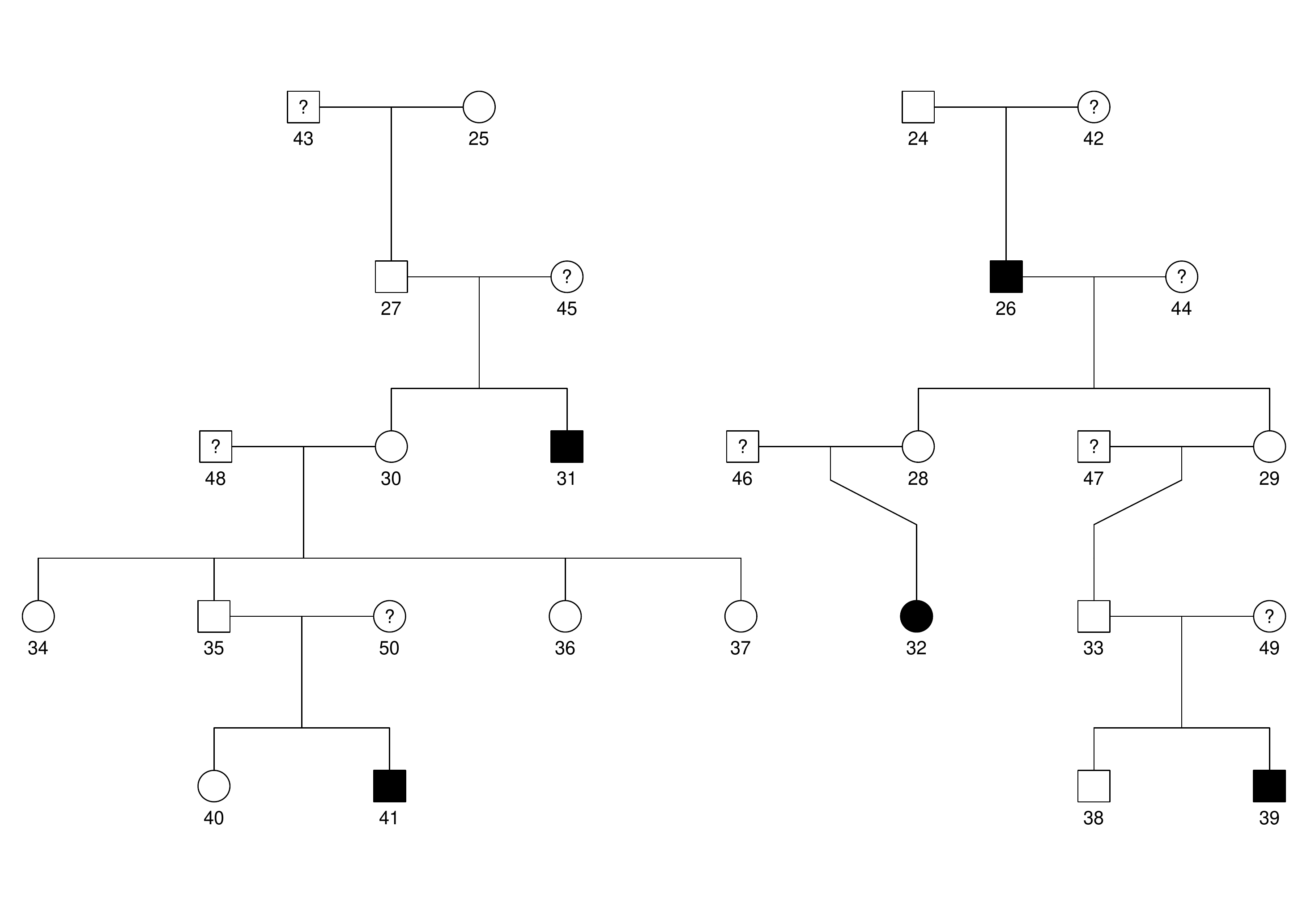}
	\end{center}
	\caption{Pedgree of a family with Retinoblastoma. Black: affected individuals, Question mark: unknown phenotype status.}\label{fig:pedigree}
\end{figure}

In total, the following likelihood is used:
\begin{align}
	L(Y; \bm \beta) = A(\bm \beta)^{-1} \prod_i^N \left\{ Z_i \left(
		\mathrm{logit}^{-1} (\beta^T \bm{x_i})
	\right) \right\}
	, \qquad
	Z_i(\pi) = P_\pi^B(Y_i = y_i) = {y_i \choose n} \pi^{y_i} (1 - \pi)^{n - y_i}
	\label{formula:lhRb}
\end{align}

where the ascertainment correction $A(\bm \beta) = 1 - P(Y = 0)^N = 1 - (\int_x P(Y = 0 | X = x) P(X = x) dx)^N$ represents the event that at least one individual is affected. If ascertainment is modeled, the covariate distribution needs to be modeled as well. In the following, we assume a random design, {\it i.e.} $X$ is drawn from the underlying population.

\subsubsection{Simulations}

All families in our data set contain several affected members ({\it e.g.} Figure \ref{fig:pedigree}). This implies that the ascertainment correction in formula (\ref{formula:lhRb}) will be close to one, {\it i.e.} the probability of no affected family members will be small. As a consequence, we did not model ascertainment in the simulations and used parameter values that emulate this characteristic. This makes it also easier to compare to other standard tests such a $\chi^2$ goodness-of-fit test which does not allow to account for ascertainment in its standard form.\par
For our simulations, we have implemented the calculation of $\pl_{y, w}$ by an exact computation, which was feasible for data considered here. An alternative is to use stochastic integration as mentioned above.\par
First, we regroup (\ref{formula:lhRb}) into sets with identical covariate vectors, by considering only discrete covariates.  Setting $c(y)$ to 0, $F_\beta(T^w_{Y, \beta})$ for a binomial $(k, n)$, covariate values $x_i$, and observed counts $y_i = (y_{i0}, ..., y_{in})$ for this covariate combination becomes

$$
	F_\beta \left( T^{w, x_i}_{Y, \beta} \right) = \sum_{e_i \in E_i(y_i, w)} P^M_{\pi(\beta)}(e_i) = \sum_{e_i \in E_i(y, w)} {|e_i| \choose e_i}\prod_j^n P_{\pi(\beta)}^B (Y = j)^{e_{ij}},
$$

where $E_i(y_i, w) = \{e_i \in \tilde{\Delta}^n | w(P^M_{\pi(\beta)}(e_i)) \le w(P^M_{\pi(\beta)}(y_i))\}$, where $\Delta^n_{\mathbb{N}}$ is the standard $(n - 1)$-simplex scaled to $n$ and restricted to $\mathbb{N}^n$ to represent all integer partitions of $n$ ($\tilde{\Delta}^n = n \Delta^n \cap \mathbb{N}^n$). For covriate combinations $x = (x_1, ..., x_K)$, we have

$$
	F_\beta \left( T^{w}_{Y, \beta} \right) = \sum_{e \in E(y, w)} \prod_i^K P^M_{\pi(\beta)}(e_i)
$$

For sample sizes up to $N = 30$, the plausibility function can still be efficiently evaluated exactly without resorting to stoachastic integration.\par

Families were simulated by deterministically distributing sample size across generations, adding a new founder per generation and drawing an inheritance vector for non-founders from a multivariate Bernoulli iid $\mathrm{Binom}(1, .5)$. Parent-of-origin was added as an additional covariate and inferred from the simulated data. Finally, outcome was drawn from the model specified above according to effects considered in the simulation scenarios. Sample size was set to $N = 8$ for all simulations, two families and two generations per family were simulated.\par
We compared the following procedures: (1) Unweighted plausibility ({\it Plausibility}), (2) Weighted plausibility with LR weighting ({\it W-Plausibility LR}), (3) Parametric Bootstrap ({\it Bootstrap}), (4) Pearson goodness-of-fit statistic comparing expected binomial counts under the logistic model with observed count ({\it Chi-Square}), (5) the likelihood-ratio test ({\it LR}), and finally (6) weighted plausibility with ''relative'' weights ({\it W-Plausibility Rel}). This relative plausibility is a weighted plausibility where parameters for the LR-weights are estimated from the data to be tested. Relative plausibility serves as an example where weights are not free of $\theta$. Unweighted plausibility evaluates a goodness-of-fit to a model where effects for variables of interest are set to zero (e.g. family).
For the simulations, 200 replications, and $10^3$ bootstrap samples (for procedure {\it Bootstrap}) were used. 

Figure \ref{fig:simN8} shows simulation results under the null hypothesis when an intercept model is compared to a model containing a family effect.
Unweighted plausibility is conservative, weighted plausibility precisely exhausts the $\alpha$-level up to a level of roughly 0.5, whereas relative plausibility is highly anti-conservative. Both the Pearon and the boostrap tests perform well but show $\alpha$-levels with conservative and anti-conservative behavior. The LR test is similar to the Bootstrap and Pearson tests except that deviations of size from $\alpha$-level are stronger.\par
Under the alternative, several scenarios with values for the intercept of 0.5 and 1 and family effects between 0.5 and 2 (on the log-OR scale) have been evaluated for an $\alpha$-level of 0.05 (figure \ref{fig:simN8A}). Relative plausibility performs best but has to be discounted due to anti-conservative behavior under the null-hypothesis. Otherwise, the LR test performs best but very similar to weighted plausibility. The difference is best explained by slightly anti-conservative behavior of the LR test at the 0.05 level. Bootstrap and Pearson's test show power close to the 0.05 level and seem unable to cope with the small sample size. Unweighted plausibility has some power when the intercept is small or when family effect is large (log-OR 2) but power is always smaller than 15\%. The simulations confirm that LR and weighted plausibility behave very similarly.

\begin{figure}
	\begin{center}
	\includegraphics[height=.45\hsize]{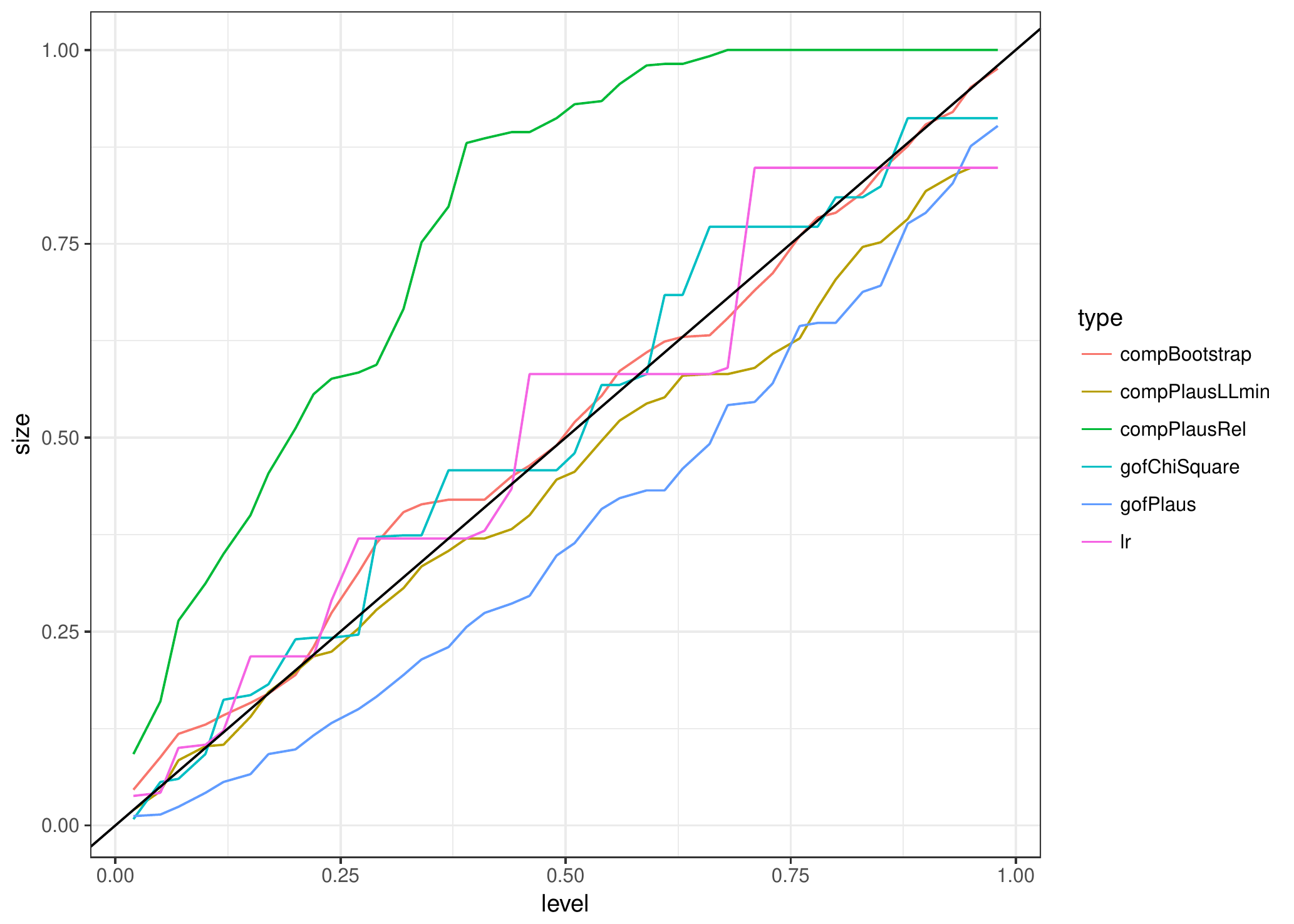}
	\end{center}
	\caption{Simulation results under the null hypothesis for a samples size of $N=8$ and a binomial family. X-axis is the $\alpha$-level of the test and Y-axis is the actual test size. {\it Bootstrap, W-Plausibility LR, W-Plausibility Rel, LR} are tests performing a model comparison. {\it Chi-Square, Plausibility} evaluate consistency with a binomial model. For details, see text.}\label{fig:simN8}
\end{figure}

\begin{figure}
	\begin{center}
	\begin{tabular}{cc}
	\includegraphics[height=.35\hsize]{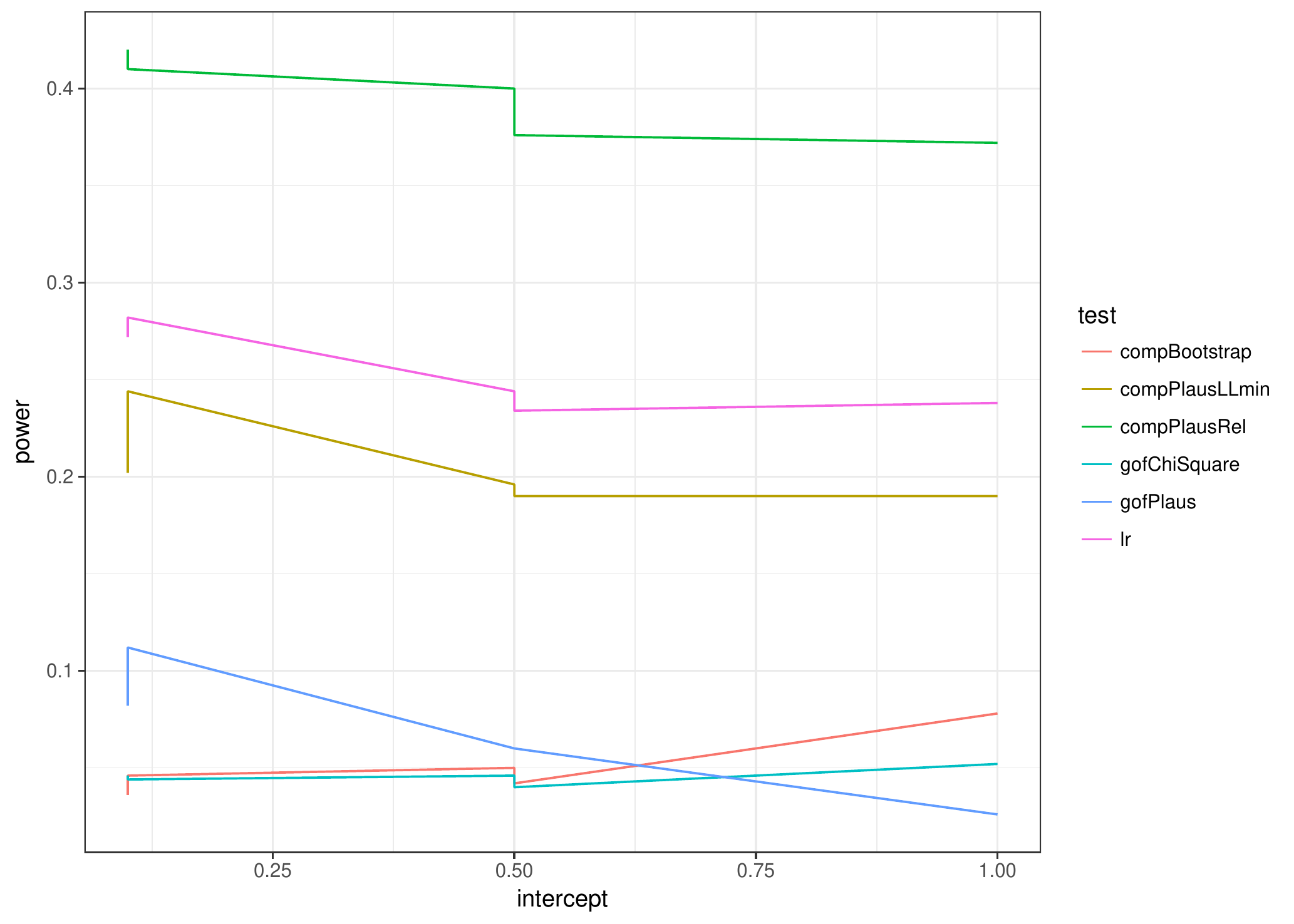} &
	\includegraphics[height=.35\hsize]{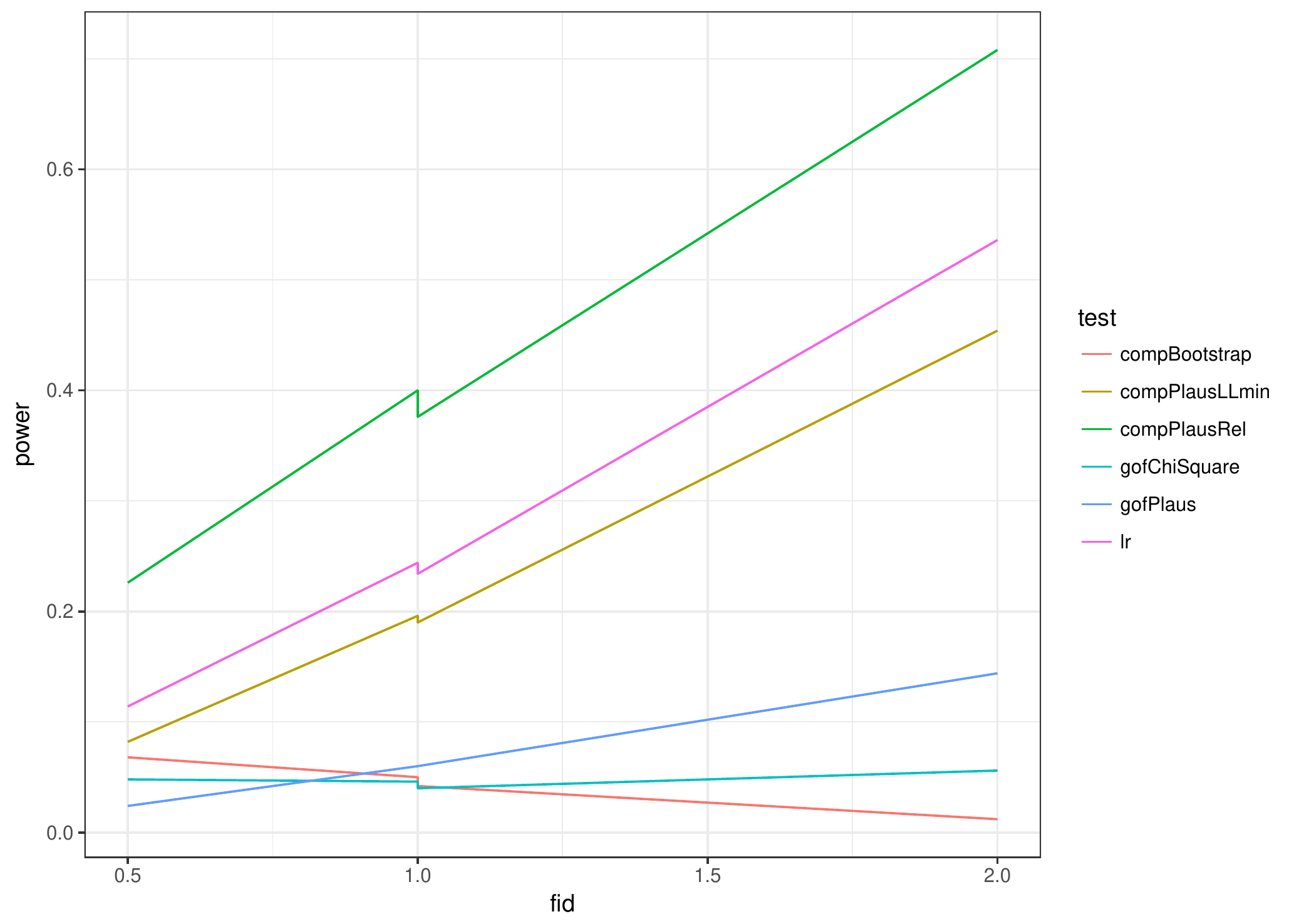}\\
	(A) & (B)\\
	\end{tabular}
	\end{center}
	\caption{Simulation results under the alternative hypothesis and a sample size of $N = 8$. (A) Fixed family effect (log-odds ratio 1) and varying intercept. (B) Fixed intercept (0.5) and varying family effect. X-axis represents the varying parameter value and the Y-axsis is power. Change points indicate scenarios that have been evaluated twice. For the compared testing procedures, see text.}\label{fig:simN8A}
\end{figure}

\subsubsection{Data analysis}

We used data from a larger database on Retinoblastoma collected from the literature. Initially, we selected the largest families as they should be most informative. To restrict the computational burden, the five smallest families have been selected from this subset. Outcome data was explicitly ignored when making this decision. Outcome distribution is shown in table \ref{tab:rb-desc}A for both total families and mutation carriers. Parent-of-origin ({\it i.e.} the sex of the transmitting parent) for mutation carriers is summarized in table \ref{tab:rb-desc}B. From this table it is apparent that parental origin strongly influences tumor status.\par
To be able to compute plausibility statistics, stochastic integration was used as complete iteration of all possible events was unfeasible. Also a grid search over all parameters was not possible as the grid increases exponentially with the number of parameters. R function {\it optim} with the Nelder-Mead algorithm was used to find plausibility estimates. The LR statistic was computed by fitting nested models and using the R function {\it anova} with the $\chi^2$ statistic. Results are shown in table \ref{tab:rb-result}. Heterogeneity between families could not be demonstrated (first half). Notably, the P-value of standard plausibility is much larger than the P-value of the weighted plausibility. This reflects the fact that standard plausibility rejects against a wider class of alternatives whereas weighted plausibility focuses power on a small class of alternatives. Analysis of the parent-of-origin (POO) effect shows statistically significant findings for weighted plausibility and the LR test. The P-value for standard plausibility is not significant. Plausibility estimates for family effects are close to but not identical to ML estimates. Technically, the ML-estimates are corrected for POO whereas plausibility estimates are not which is one explanation of discrepancies apart from differences in methodology. In all cases, plausibility P-values are larger than LR-based P-values.

\begin{table}
	\begin{center}
	\begin{tabular}{cc}
		\begin{tabular}{l|r|rrr|rrr}
		Family & \# & Y0 & Y1 & Y2 & Y$_m$0 & Y$_m$1 & Y$_m$2 \\
		\hline
		1	& 17 & 13  & 3 & 1	& 3 & 3 & 1\\ 
		2	& 18 & 17  & 1 & 0	& 7 & 1 & 0\\
		3	& 19 & 16  & 1 & 2	& 5 & 1 & 2\\
		4	& 20 & 16  & 2 & 2	& 4 & 2 & 2\\
		5	& 31 & 20  & 9 & 2	& 6 & 8 & 1\\
		\end{tabular}
		&
		\begin{tabular}{l|rr}
		Y & pat & mat \\
		\hline
		0	& 13 & 12\\ 
		1	& 4  & 11\\
		2	& 1	 & 5
		\end{tabular}
		\\
		(A) & (B)
	\end{tabular}
	\caption{Descriptive data analysis. (A) Distribution of number of affected eyes. {\it \#}: number of family members, {\it Y$i$}: number of family members with $i$ affected eyes. {\it Y$_mi$}: numbers among mutations carriers. (B): cross-tabulation of eye affection status Y and parental origin of mutation ({\it pat}: paternal, {\it mat}: maternal).
	}\label{tab:rb-desc}
	\end{center}
\end{table}

\begin{table}
	\begin{center}
	\begin{tabular}{lllrrrr}
	Method & F0 & F1 & Intercept & Fid & Poo & P\\
	\hline
	Plausibility			& y $\sim$ 1  & y $\sim$ fid  & -2.18 & - & - & 0.447\\ 
	Weighted Plausibility	& y $\sim$ 1  & y $\sim$ fid  & -2.17 & - & - & 0.197\\ 
	LR						& y $\sim$ 1  & y $\sim$ fid  & -0.79 & -1.91, 0.28 & - & 0.171\\ 
	\hline
	Plausibility			& y $\sim$ fid  & y $\sim$ fid + poo & -0.75 & -2.10, 0.23 & - & 0.484\\ 
	Weighted Plausibility	& y $\sim$ fid  & y $\sim$ fid + poo & -0.63 & -2.69, 0.18 & - & 0.0028\\ 
	LR						& y $\sim$ fid  & y $\sim$ fid + poo & -1.63 & -2.83, 0.21 & 1.87 & 0.0019\\ 
	\end{tabular}
	\caption{Results of data analysis. {\it Method:} test used, {\it F0:} Null-model in notation outcome $\sim$ covariates, {\it fid}: factor for family, {\it poo}: parent-of-origin. {\it F1:} Model under alternative. {\it Intercept:} estimated coefficient. {\it Fid:} range of coeffiencts for the families. {\it Poo:} coefficient for parent-of-origin. {\it P:} P-value.}\label{tab:rb-result}
	\end{center}
\end{table}

\subsection{High-dimensional data}

In this sub-section, we investigate finite sample properties of plausible model comparisons in the high-dimensional setting, using simulations with a normally distributed outcome. We also apply the global test constructed above to a well-known prostate cancer data set where a binary outcome is modeled with a logistic model.

\subsubsection{Simulations}

\begin{figure}
	\begin{center}
	\begin{tabular}{cc}
	\includegraphics[height=.35\hsize]{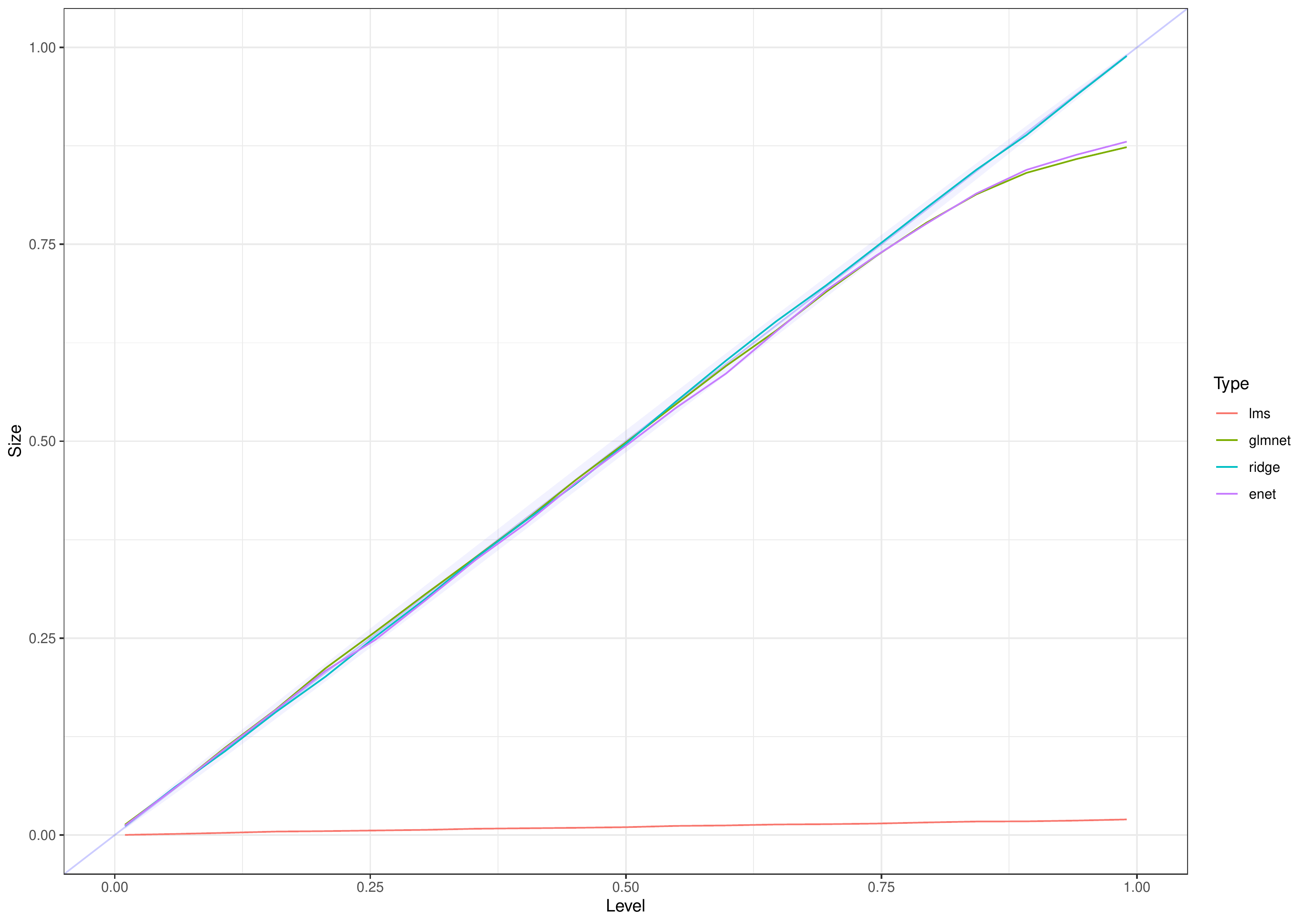} &
	\includegraphics[height=.35\hsize]{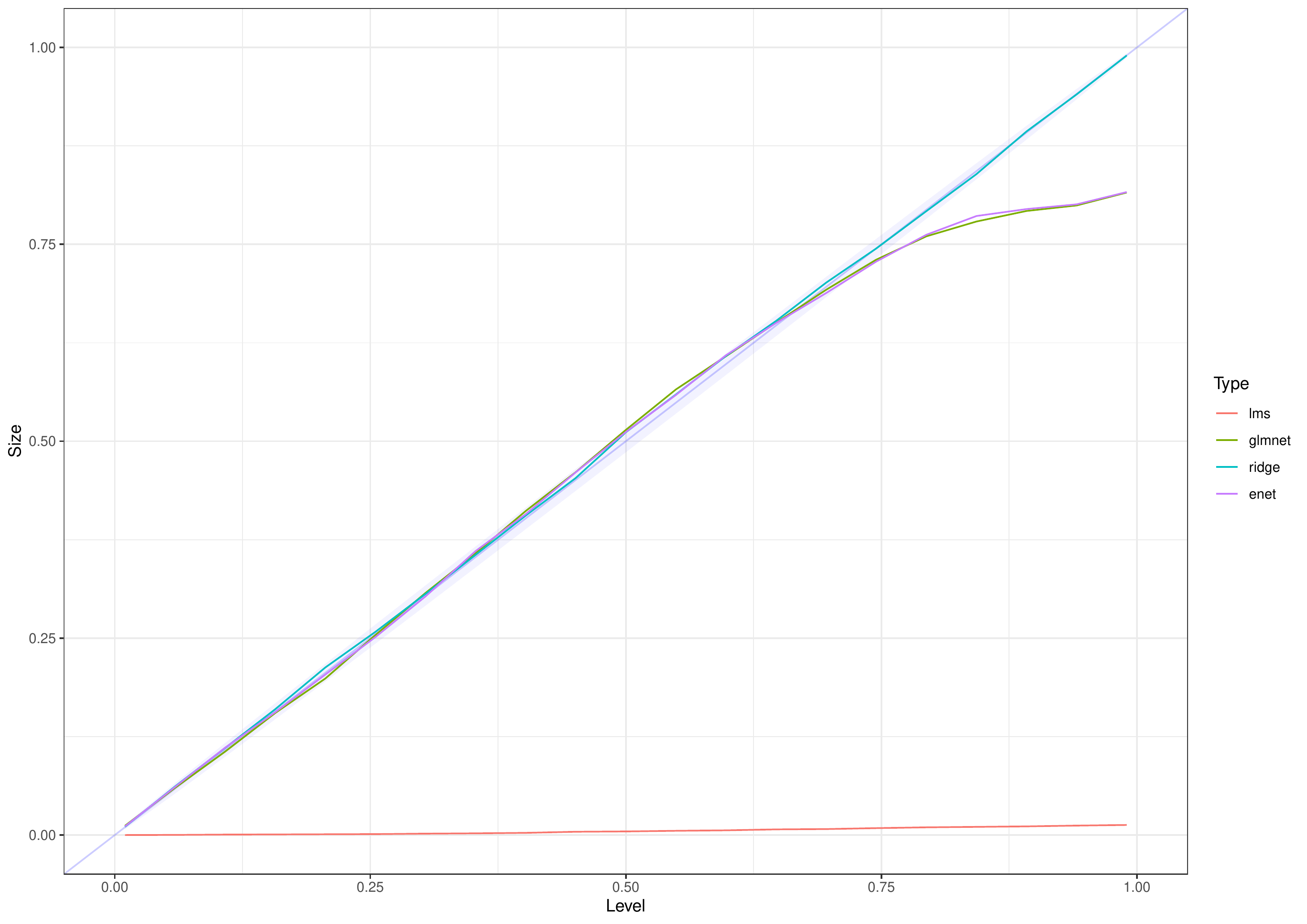}\\
	(A) & (B)\\
	\end{tabular}
	\end{center}
	\caption{Simulation results under the null hypothesis. X-axis is the $\alpha$-level of the test and Y-axis is the actual test size. (A) N = 200, p = 500, correlation low (see text), (B) N = 500, p = 500, correlation high (see text).}\label{fig:simPenalizedNull}
\end{figure}

In order to evaluate behavior of the compared tests, high-dimensional data was simulated. Sample size was chosen to be either $N = 200$ or $N = 500$. $p = 500$ covariates were simulated. Covariates were drawn in independent blocks of 10 covariates with an exchangeable correlation structure of $.1$ (low) or $.9$ (high). Under the null, the outcome was independently drawn from a standard normal distribution. $5 \times 10^3$ replications were used to determine test size. For the stochastic integration $10^3$ samples were used and the mixing parameter of elastic net regression was set to $\alpha = 0.9$. 1e3 data splits were used for {\it lms}.
Figure \ref{fig:simPenalizedNull} shows test sizes. Ridge regression perfectly exhausts the $\alpha$ level, whereas elastic net and Lasso exhaust the $\alpha$ level up to 0.75 for low correlation above which the procedures become conservative. This is due to the sparsity induced by these methods. For high correlation, this behavior is a bit more pronounced and conservative behavior starts at an $\alpha$ level of roughly 0.6. Under the null, elastic net and Lasso behave almost identical. {\it lms} shows poor exhaustion of the $\alpha$-level in both scenarios.

\begin{figure}
	\begin{center}
	\begin{tabular}{c}
	\includegraphics[height=.65\hsize]{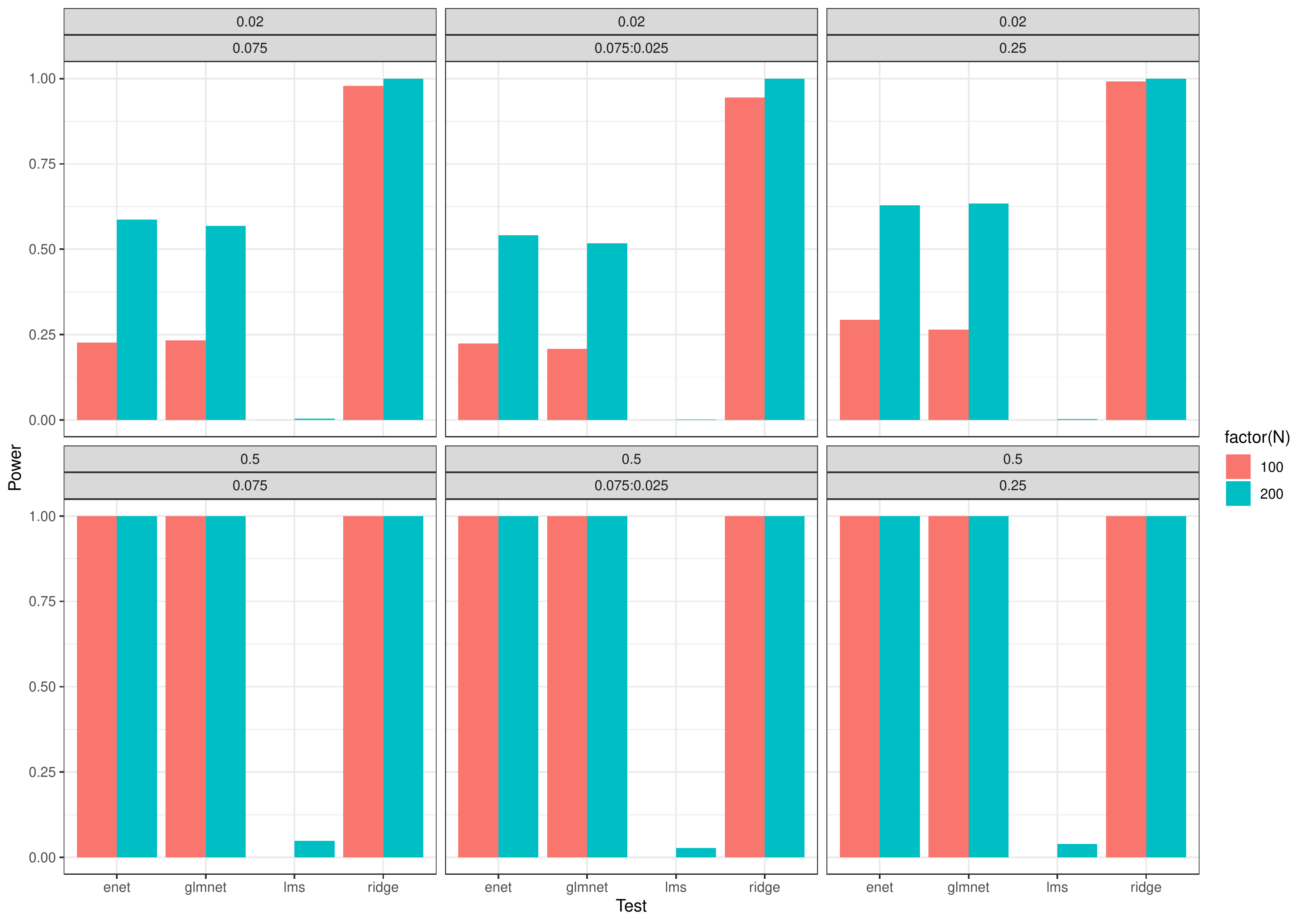}\\
	(A)\\
	\includegraphics[height=.65\hsize]{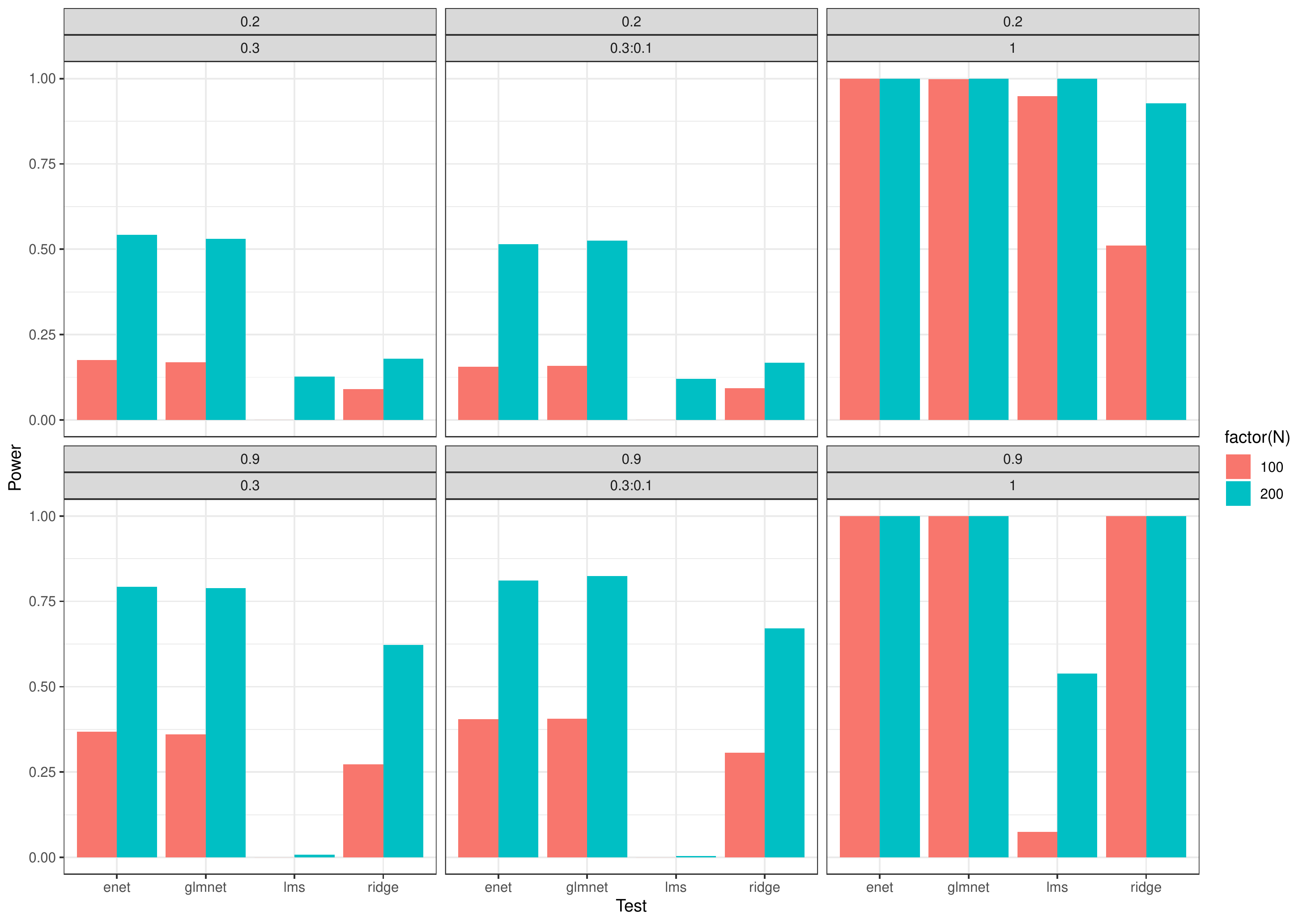}\\
	(B)
	\end{tabular}
	\end{center}
	\caption{Simulation results under the alternative hypothesis. Color indicates sample size. The top row of each cell indicates correlation structure (0.02, 0.5, see text). The second row indicates range of effect sizes used for the first two blocks of covariates (see text). Part (A): dense alternative; Part (B): sparse alternative. Methods considered are Lasso ({\it glmnet}), Elastic Net ({\it enet}), Lasso multi-split ({\it lms}), Ridge regression ({\it ridge}).}\label{fig:simPenalizedAlt}
\end{figure}

Under the alternative, simulations of covariates were performed identically to simulations under the null hypothesis. Sample sizes of $N = 100$ and $N = 200$ were considered. To generate an outcome, positive regression coefficients were chosen for the first two blocks of covariates. For the dense scenario (A), regression coefficients of the first or the first two blocks were set to identical values within each block. The sparse scenario (B), only assigns positive regression coefficients to the first covariate of the first or the first two blocks. All other coefficients were zero. Outcomes were simulated by adding a standard normal to the predictor $\mathbf X \mathbf \beta$. Results from the simulations are shown in Figure \ref{fig:simPenalizedAlt}. In the dense scenario (A), ridge regression performed best throughout. Increasing correlation increased power for all methods. {\it lms} has poor power in these scenarios.\par
In the sparse scenario, lasso and elastic net performed very similarly and were the most powerful procedures in all scenarios that were considered. {\it lms} could outperform ridge regression for the scenario of a single, strong effect and low correlation between covariates. In all other scenarios, {\it lms} was the least powerful procedure. In general, power was again poor for {\it lms}.\par
For a sample size of 100, plausibility based methods had sufficient power in scenarios that matched the method (dense vs sparse). For a sample size of 200 power was still below 80\% for many scenarios.\par
In general, correlation structure was very important as power increases substantially when comparing low and high correlation scenarios, {\it e.g.} power for lasso and an effect size of 0.075 for low correlation and sample size of 200 is $\sim$55\% and increases to $>$ 80\% (dense scenario).

\subsubsection{Data analysis}

As an illustration, we analyze a prostate cancer data set \citea{singh_gene_2002} as provided by R package {\it sda} \citea{ahdesmaki_sda:_2015}. The data set contains healthy (N = 50) and prostate cancer samples (N = 52) and measurements of 6033 gene expression values. We analyze the data set using a logistic model and the penalties used in the simulation section as well as the {\it lms} method. To get the {\it lms} run, the penalty parameter had to be increased from the default choice ($\lambda = \sqrt{N + p}/5$ instead of $\lambda = \sqrt{N + p}/10$). With this penalty, {\it lms} could not select any predictor and resulted in a P-value of 1. All plausibility models resulted in P-values $< 10^{-3}$, when the P-value was limited by the number of stochastic integration samples.

\begin{figure}
	\begin{center}
	\includegraphics[width=.9\hsize]{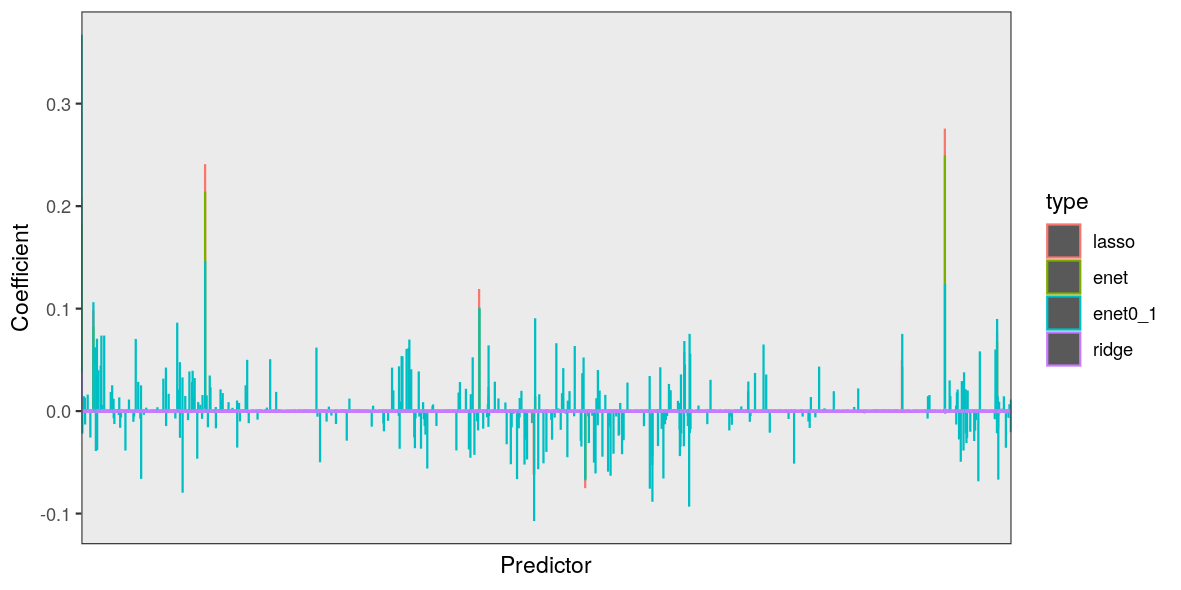}
	\end{center}
	\caption{Data analysis. X-axis are predictors 1, ..., 6033 with regression coefficients represented as bars for the methods: Lasso ({\it lasso}), Elastic Net ({\it enet}, $\alpha = 0.9$), Elastic Net ({\it enet0\_1}, $\alpha = 0.1$), Ridge regression ({\it ridge}).}\label{fig:dataAnalysis}
\end{figure}

To illustrate the methods, figure \ref{fig:dataAnalysis} shows the regression coefficients from the penalized plausibility models evaluated at the median penalty parameter from models generated during stochastic integretion. The figure clearly reflects the different sparsities of the methods. While lasso and elastic net ($\alpha = 0.9$) select few variables, elastic net with $\alpha = 0.1$ selects more and ridge all variables. Effect size change correspondingly (large for sparse methods, low for dense methods). We discuss this finding later.

\section{Discussion}

%
%

In this paper, we have extended the plausibility framework by a weighing component. For discrete data, the weighing leads to a re-ordering of data sets so that cumulative probabilities are no longer evaluated according to ordered probabilities but according to an (arbitrary) re-ordering induced by the weighing. Intuitively, it is clear that properties from plausibility carry over to weighted plausibility as long as the weighing does not depend on the data. Ordering by probability thus turns out to be just one possible ordering corresponding to goodness-of-fit evaluations. Comparing models corresponds to weighing by LR. More precisely, a plausibility-ratio should be considered but this is computationally expensive. This aspect will be further discussed below.

The flexibility of weighing is illustrated by our data analysis in the high-dimensional setting. Regression coefficients in this case strongly depend on the sparsity of the method. This fact is well known among practitioners and can lead to difficulties in model interpretation as it is often unclear how to choose sparsity. To the authors' knowledge theoretical underpinning is lacking. A weighted plausibility can be constructed for which the weighing function tunes the sparsity parameter. This would create a procedure reflecting steps taken in practice with guarantees for the alpha-level. Other weighting schemes could consider non-nested models. By evaluating data under one of the models and weighing data sets by a contrast between the models. This could be prediction accuracies, cross-validated LRs, or information criteria. The possibilities are clearly endless.\par

On the other hand, the resulting tests have less strong interpretations as compared to alternative approaches. In the high-dimensional data analysis, there are some important difference between the considered plausibility tests and the {\it lms} method. {\it lms} uses models coming from a conditional lasso model. {\it lms} can thereby reject a single co-variate while controlling family-wise error rates. The only test we considered was a global test based on the linear predictor of all covariates. Rejecting the null hypothesis would therefore not entail the rejection of any single covariate and further steps are needed. One approach is to hierarchically split the covariates according to some outcome-independent procedure (say hierarchical clustering), and test covariates along the tree. This can lead to efficient procedures, {\it e.g.} \citea{meinshausen_hierarchical_2008,goeman_multiple_2008}. A straightforward approach would analyze covariates marginally, {\it i.e.} the model would be an intercept model against the model of all selected variables and not be corrected for the other covariates. If a conditional model is desired, model comparison of penalized models can be used via a Bayesian prior in the analysis. The hyper-parameters would have to be fitted as part of the optimization in an empirical-Bayes spirit. However, it is very expensive to fit such a plausibility model as in general a grid search is required. It is an open question in how far approximations can be used and whether errors can be bounded, if say, ML-estimates are used instead of plausibility estimates.\par

The plausibility framework does have some important limitations. We mention non-plausible parameters and non-plausible parameter values. Non-plausible parameters have to be handled by a different estimation procedure which might involve iterated estimation between say, plausibility and ML. In the normal model, this limitation is non-essential. Non-plausible parameter values seem to be rather a technical problem, although the problem manifested itself in simulations during the preparation of this manuscript. Some care therefore needs to be taking when deriving the likelihood to be used. A major limitation is certainly computation time. While this limitation is shared with other methods like cross-validation as used in targeted learning, bootstrapping, or data-splitting ({\it lms}), the problem is usually more severe for plausibility. During stochastic integration, a full penalized analysis including parameter tuning through cross-validation has to be performed for every data set. In our analyses of high-dimensional data, we did not include covariates under the null, which made the evaluation of plausibility relatively cheap. A single data set was analyzed in a matter of a few minutes. There are several ways to improve efficiency. Using importance sampling during stochastic integration seems to be a promising approach. We have also implemented some short-cuts, for example evaluating the penalty parameter first and using it throughout stochastic integration. Certainly, efficiency remains a challenge.\par

Plausibility can guarantee test sizes under the null for finite samples. A fully non-parametric treatment leads back to a non-parametric bootstrap procedure as data would be drawn from the empirical distribution function (EDF). Every data set has maximal probability under its own EDF and the plausibility approach would therefore not be useful. A compromise by using, say, kernel density estimates could mitigate this problem. However, test sizes cannot be guaranteed under such an approach as the density estimate would involve an estimation error. Further research is needed to explore non-parametric or semi-parametric models.

Targeted analysis defines so-called target parameters for which statistical inference is required. Typically, this would be the difference between a treatment effect and a counterfactual opposite treatment decision in the case of clinical studies \citea{van_der_laan_targeted_2011}. Testing the target parameter corresponds to a model comparison of a null effect with a model including a treatment effect. Given that the target parameter can be defined using high-dimensional data, the model comparison is high-dimensional. The proposed way to evaluate the global test is the influence function using cross-validation \citea{zheng_asymptotic_2010}. Weighted plausibility is a natural alternative to evaluate the test statistic in targeted analyses. Such an analysis would be similar to what was used in section \ref{section:high-dim}. Weighted plausibility is therefore an alternative to perform statistical inference for the ensembles learned in targeted learning. Such an analysis could be beneficial in some cases as the full data is used as compared to cross-validation as proposed for targeted learning. Empirical studies would have to show under which circumstances plausibility would offer advantages or would be disadvantageous.\par

In conclusion, the plausibility framework allows to conduct exact model comparisons in small sample size situations such as our RB data set. In these cases, valid concerns can be present about the validity of asymptotic or empirical p-values computed by other means, which was confirmed by our simulations. The same principles can be applied in the high-dimensional setting enabling statistical inference in these situations. The conceptual similarity to the bootstrap is reflected by the fact that any data independent test statistic can be used as a weighing function allowing further applications in high-dimensional data analysis, targeted analysis or non-nested model comparisons.

\par

{\it Software}\par
We provide an implementation for analyzing data using weighted plausibility. The use of this package is documented in  appendix \ref{app:r}.

\newpage
\appendix
\section{Appendix}

\subsection{Non-plausible parameters} \label{app:nonplausible}

If we take the likelihood for a i.i.d Bernoulli experiment to be
	$ L(\theta; Y) = \prod_{i = 1}^N \theta^{Y_i} (1 - \theta)^{1 - Y_i} $,
	$ L(0.5, Y) = .5^N $.
Therefore, $\pl_Y( [0, 1] ) = 1$ and the supremum is achieved in $\theta = .5$ and $\theta = \hat \theta$. For this likelihood, every data set would have plausibility of 1 even if we exclude $\theta = .5$, and the non-plausible value of 0.5 would not be important, however, the example can be easily extended.\\


If we iterate the experiment and draw $K$ groups of $N$ i.i.d Bernoulli variables, the non-plausible value starts to matter. The likelihood can be modeled as
	$L(\theta; Y) = \prod_{j =  1}^K \prod_{i = 1}^N \theta_j^{Y_{ji}} (1 - \theta_j)^{1 - Y_{ji}}$.
If we choose
	$ A = \{\theta \in [0, 1]^K | \theta_j = \theta_1 \, \forall j \} $, 
the likelihood reduces as above for $\theta_1 = 0.5$ and $\pl_Y(A) = 1$ for all data sets. However, the plausibility restricted under $A$ is smaller than 1 in general, if we exclude $\theta_1 = .5$ from $A$ and the $\theta_i$'s are truly different. If we now consider the nested experiment to come from a binomial distribution, where $Y'_j$ is the number of outcomes 1 in each block, we get
	$L'(\theta; Y') = \prod_{j =  1}^K { N \choose Y'_j } \theta_j^{Y'_{j}} (1 - \theta_j)^{N - Y'_{j}}$.
In this case, 0.5 is no longer a non-plausible value. For example, $K = 2$ and $Y'_1 = 0, Y'_2 = N$, results in $L'(0.5, Y') = 0.5^{2N}$. However,  $Y''_1 = N/2, Y''_2 = N/2$ (assuming $N$ even), results in
	$L'(0.5, Y'') = { N \choose N/2 }^2 0.5^{2N} > L(0.5, Y')$ for $N \ge 2$.
$ \theta_1 = 0.5 $ is therefore no longer a non-plausible value. However, $L'$ still does not have unique point mass, as for $\theta_1 = .5$, we can replace $Y'$ with $Y''' := N - Y'$ and get $L(\theta_1; Y''') = L(\theta_1; Y')$ for, in general, non-exchangeable observations. The assumption requiring unique point-mass is therefore too strong as the plausibility function can be readily evaluated. We hypothesize that it is sufficient to require unique point-mass on a dense subset of the parameter space although this is not straightforward to show.

\subsection{Plausibility is a special case of weighted plausibility}\label{app:specialcase}

To show that that plausibility is a special case of weighted plausibility, we choose the weighting function as $w(y) = pl_y(A)$. For observation $y$, we then get $pl_y^w(A) = \sup_{θ \in A} F_θ(T^w_{y, θ})$ and $F_θ(T^w_{y, θ})$ becomes
\begin{align*}
	&	 F_θ(pl_y(A)) \\
	= &  P_θ(pl_Y(A) ≤ pl_y(A)) \\
	= &  P_θ(F_{θ^{*'}}(T_{Y, θ^{*'}}) ≤ F_{θ^*}(T_{y, θ^*})) \\
	≤ &  P_{θ^*}(F_{θ^{*'}}(T_{Y, θ^{*'}}) ≤ F_{θ^*}(T_{y, θ^*})) \\
	≤ &  P_{θ^*}(F_{θ^{*}}(T_{Y, θ^{*}}) ≤ F_{θ^*}(T_{y, θ^*})) \\
	\stackrel{(1)}{=}
	  &  P_{θ^*}(T_{Y, {θ^{*}}} ≤ T_{y, θ^{*}}) = F_{θ^*}(T_{y, θ^*}) = \mathrm{pl}_y(A).
\end{align*}
Here, $θ^{*}, θ^{*'}$ are the plausibility estimates for $y$ and $Y$, respectively.
The distribution function $F_{θ^*}$ can be dropped in (1), as $F_{θ^*}(T_{Y, θ^*}) ≤ F_{θ^*}(T_{y, θ^*})$ iff $T_{Y, {θ^*}} ≤ T_{y, θ^*}$.
The second inequality follows due to $F_{θ^{*}}(T_{Y, θ^{*}}) ≤ F_{θ^{*'}}(T_{Y, θ^{*'}})$.
The first inequality is shown indirectly. First, we note that we evaluate the event $Z := \{Y | pl_Y(A) ≤ pl_y(A)\}$, which is independent of $θ$, once under $P_θ$ and once under $P_{θ^*}$. Let us assume that there exists a $θ$ so that $P_θ(Z) > P_{θ^*}(Z)$. Assuming that $T$ is either discrete or continuous (but not mixed), we can find an outcome $z := \argmax_{ z' ∈ Z } T_{z', θ}$. But then,
	$pl_z(A) \ge F_θ(T_{z, θ}) > pl_y(A)$, as
	$\{Y | T_{Y, θ} ≤ T_{z, θ} \} \supset Z$. This is a contradiction to the definition of $Z$.

Supremizing over $θ$ shows that $\sup_θ F_θ(pl_y(A)) \ge \mathrm{pl}_y(A)$ as $θ^*$ is among the $θ$'s, which concludes the proof.

Note, that we do not need the plausibility estimates $θ^{*}, θ^{*'}$ or $z$ to be unique. They only need to achieve the supremum or maximum in the respective terms.

\subsection{Proofs}

\subsubsection{Proof of lemma \ref{lemma:conv}}

\begin{proof}
	If $\mathrm{Pl}_\theta(\alpha)$ is continuous in $\alpha$, {\it i.e.} $\mathrm{Pl}_\theta(\alpha) = \mathrm{Pl}_\theta(\alpha-)$, $\mathrm{Pl}_\theta(\alpha) = \alpha$ by the properties of the CDF. Otherwise, assume $\theta$ known and define $pl_{Y, \theta} := F_\theta(T_{y, \theta})$ and $Pl_{Y, \theta}$ the corresponding CDF. Then $\Delta := \sup\{\mathrm{Pl}_{1, \theta}(\alpha) - \mathrm{Pl}_{1, \theta}(\alpha-)\}$ the supremum over the discontinuities of $\mathrm{Pl}_{1, \theta}(\alpha)$ which we assume to be bound away from 1.
	Let $\alpha_M := \arg \sup\{\mathrm{Pl}_{n, \theta}(\alpha) - \mathrm{Pl}_{n, \theta}(\alpha-)\}$ and $Y_m := \{y | \pl_{y, \theta}(A) = \alpha_m\}$. Then for each $y_m \in Y_m, T_{y, \theta} \le \Delta^n$, as $T = T_n$ is the product measure of $T_1$ and $P_\theta(Y_m) \le |Y_m| \Delta^n$ ($|\cdot|$ denotes cardinality). Due to the uniqueness assumption of point masses, each $y \in Y_m$ contains exchangeable observations, {\it i.e.} different vectors $y$ are identical up to ordering. The size of $|Y_m|$ is given by a multinomial coefficient which can be upper bounded by the binomial coefficient $\binom{n}{n/2}$. The maximal discontinuity for this case is achieved for class probabilities close to .5 and therefore $\Delta \approx 0.25$. By Sterling's approximation $\binom{n}{n/2} \sim \frac{1}{\sqrt{\pi/2 n}} 2^n$, so that $P_\theta(Y_m) \to 0$ with rate of at least $\sqrt{n}$. This implies that $\mathrm{Pl}_{n, \theta}(\alpha)$ converges pointwise to the CDF of the uniform. Applying Portmanteau's theorem completes the proof for known $\theta$.\par
	Let now $\theta^*_n$ be a sequence of plausibility estimates with $\theta^*_n \stackrel{P}{\to} \theta^*$ (see proof of theorem 3 in\citea{martin_plausibility_2015}), or equivalently, $ \pl_n(\theta) \to 0$ with $P_{\theta^*}$-probability for $\theta \ne \theta^*$.  $\alpha_M(\theta)$  can therefore be bounded by a continuous function in an appropriately chosen neighborhood $U$ of $\theta^*$, which completes the proof.
\end{proof}

\subsubsection{Lemma \ref{lemma:symm}}

To preapre the proof of lemma \ref{theorem:lrEquiv}, we start with a lemma about event sequences.

\newtheorem{lemmaSymm}[lemmas]{Lemma}
\begin{lemmaSymm}\label{lemma:symm}
	Under the assumptions of section \ref{section:comparison}, let $E_1, ..., E_n$ and $E'_1, ..., E'_n$ be two sequences of events for which either $E_n \subset E'_n$ or $E'_n \subset E_n$. Let $D_n = E_n \triangle E'_n$ be the symmetric difference. If probabilities of both sequences converge for some $\theta \in \Theta$, $\alpha := \lim_{n \to \infty} P_\theta(E_i), \beta := \lim_{n \to \infty} P_\theta(E'_i)$, then
	$$
		\lim_{n \to \infty} P_\theta(E_i) = \lim_{n \to \infty} P_\theta(E'_i)
		\quad\mathrm{if\, and\, only\, if}\quad
		P_\theta(D_n) \to 0 \,\mathrm{in\, probability.\,}
	$$
\end{lemmaSymm}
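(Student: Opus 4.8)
The statement is essentially an elementary fact about monotone set differences, so the plan is to reduce everything to the additivity of the measure $P_\theta$ on the nested pair $\{E_n, E'_n\}$. Without loss of generality I would treat the two cases $E_n \subset E'_n$ and $E'_n \subset E_n$ symmetrically; suppose $E_n \subset E'_n$. Then $D_n = E'_n \setminus E_n$ and, by finite additivity, $P_\theta(D_n) = P_\theta(E'_n) - P_\theta(E_n)$ for every $n$. Note that $P_\theta(D_n)$ is a deterministic sequence in $[0,1]$ once $\theta$ is fixed, so ``$P_\theta(D_n) \to 0$ in probability'' is the same as ordinary convergence $P_\theta(D_n) \to 0$; I would remark on this at the outset to avoid any confusion about the mode of convergence.

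For the forward direction, assume $\lim_n P_\theta(E_n) = \lim_n P_\theta(E'_n) =: \alpha = \beta$. Then $P_\theta(D_n) = P_\theta(E'_n) - P_\theta(E_n) \to \beta - \alpha = 0$, which is the claim. For the converse, assume $P_\theta(D_n) \to 0$. By hypothesis both $P_\theta(E_n) \to \alpha$ and $P_\theta(E'_n) \to \beta$ exist, so passing to the limit in the identity $P_\theta(E'_n) - P_\theta(E_n) = P_\theta(D_n)$ gives $\beta - \alpha = 0$, i.e. $\alpha = \beta$, which is the desired equality of limits.

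The only genuine subtlety — and the place I would be most careful — is the reduction to the deterministic statement: one must be sure that ``probabilities converge'' is being read as convergence of the real numbers $P_\theta(E_n)$, not convergence of the events themselves, and that the phrase ``in probability'' in the conclusion is harmless because $P_\theta(D_n)$ carries no further randomness. I would also explicitly invoke the assumption that the sequences are genuinely nested (so that the symmetric difference coincides with the signed difference of measures); without nestedness the identity $P_\theta(D_n) = |P_\theta(E'_n) - P_\theta(E_n)|$ can fail, and only the forward direction would survive. Everything else is a one-line application of additivity and taking limits, so no further machinery is needed.
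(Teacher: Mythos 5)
Your proof is correct and rests on the same additivity identity $P_\theta(E'_n)=P_\theta(E_n)+P_\theta(D_n)$ that drives the paper's argument; the paper merely phrases both directions as proofs by contradiction and handles the possibility that the inclusion direction alternates with $n$ by passing to subsequences. Your ``without loss of generality'' is not quite legitimate as stated (the hypothesis only gives an inclusion per $n$, not uniformly in one direction), but it is repaired by the observation you already make at the end --- that $P_\theta(D_n)=\lvert P_\theta(E'_n)-P_\theta(E_n)\rvert$ holds for every $n$ whichever inclusion is in force --- after which both implications follow by taking limits exactly as you do.
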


\begin{proof}
	,,$\Rightarrow$'': Assume $P_\theta(D_n) > \epsilon > 0$ for $n > n_0 \in \mathbb{N}$, $U_n := E_i \cup E'_i$, $I_n := E_i \cap E'_i$.\\
	If $E'_n \supset E_n$ for all $n > n_1$, then $P_\theta(E'_n) = P_\theta(U_n) =  P_\theta(I_n) + P_\theta(D_n) = P_\theta(E_n) + P_\theta(D_n) \underset{n \to \infty}{\to} c > \alpha + \epsilon$.
	Otherwise, $E_i \supset E'_i$ infinitely often. By applying the above argument to the sub-sequence for which this inclusion holds, the same contradiction arises.\\
	,,$\Leftarrow$'':
	Assume $|\beta - \alpha| > \epsilon > 0$. For $E'_n \supset E_n$ for all $n > n_1$,  $P_\theta(E'_n) = P_\theta(U_n) =  P_\theta(I_n) + P_\theta(D_n) \underset{n \to \infty}{\to} \alpha$, a contradiction, which arises again for the sub-sequence for which $E_i \supset E'_i$.
\end{proof}

\subsubsection{Proof of theorem \ref{theorem:lrEquiv}}

\begin{proof}
	$Y = (Y_1, ..., Y_n)$, $Y_i \ \mathrm{iid} \sim P^0_\theta$, $P_\theta = (P^0_\theta)^n$. The rejection region of the LR test is given by $R^{LR}_n = \{y | w(y) < c \}$, where $c$ is the appropriately transformed quantile of a $\chi^2$-distribution.
	For the rejection region of the plausibility test, we have $R^{pl}_n = \{y | w(y) < c'\}$, where $c'$ is chosen smallest, so that $P_\theta(R^{pl}_n) \le \alpha$.
	By definition, we therefore have $R^{pl}_n \subset R^{LR}_n$ or $R^{LR}_n \subset R^{pl}_n$. Let $D_n$ be the symmetric difference between the rejection regions, {\it i.e.} $D_n = R^{pl}_n \triangle R^{LR}_n$. From standard likelihood theory we have $P_\theta(R^{LR}_n) \overset{P}{\to} \alpha$. Using lemma \ref{lemma:conv}, we also have  $P_\theta(R^{pl}_n) \overset{P}{\to} \alpha$. Applying lemma \ref{lemma:symm} completes the proof.
\end{proof}

For a given $\theta$, the rejection region of the LR test is composed of $y$'s, for which the LR is large, {\it i.e.} $w(y)$ is small. As $w(y)$ is also used as the weighing function in the plausibility test, the rejection regions overlap and thereby fulfill the conditions of Lemma \ref{lemma:symm}.

\subsubsection{Proof of lemma \ref{lemma:marginal}}

\begin{proof}
	For data generating parameter $\theta^\bullet = (\psi^\bullet, \lambda^\bullet)$ it is to be shown that 		$P_{\theta^\bullet}(
		\Pi^{w}_y(\alpha) \ni \psi^\bullet
	) \ge 1 - \alpha.
	$

	For event $\Pi^{w}_Y(\alpha) \ni \psi^\bullet$ and some appropriately chosen constant $c_\alpha \in \mathbb{R}$, we have\\
	\begin{align*}
		& \psi^\bullet  \in \Pi^{w}_Y(\alpha) = \{ \psi \, : \, \mathrm{ppl}_Y(\psi) > \alpha \} \\
	\Leftrightarrow \qquad
		&  \mathrm{ppl}_Y(\psi^\bullet) > \alpha\\
	\Leftrightarrow \qquad
		&  \sup_\lambda T_{Y, (\psi^\bullet, \lambda)} > \alpha\\
	\Leftrightarrow \qquad
		& Y \in \{ y \,:\, l(\psi^\bullet, \lambda^*) - l(\psi^*, \lambda^*) > c_\alpha \}\\
		& \supset \{ y \,:\, l(\psi^\bullet, \lambda^\bullet) - l(\psi^*, \lambda^*) > c_\alpha \} =: \mathcal{Y}_{c_\alpha}
	\end{align*}
	
	The inclusion in the last step holds, as $ l(\psi^\bullet, \lambda^*) \ge l(\psi^\bullet, \lambda^\bullet) $.  $c_\alpha$ is chosen to fulfill the constraint imposed by $\alpha$. The final event $\mathcal{Y}_{c_\alpha}$ is exactly the event for which the plausibility function for $\theta^\bullet$ has a value greater or equal to $1 - \alpha$. As the plausibility function is stochastically larger than uniform, computing probabilities of all events proofs the lemma.\\


\end{proof}

The inclusion relation between the last events reflects the price that is paid for profiling out $\lambda$ in the construction of the marginal plausibility region.

\subsubsection{Proof of lemma \ref{lemma:profileML}}

\begin{proof}
	Let $T^{\hat \sigma^2}_{Y, \beta}$ be the profile-plausibility and assume $\beta^* \ne \hat \beta$.
	Define data $Y'$ as having rescaled residuals by the factor $\hat \sigma^2(\beta^*) / \hat \sigma^2(\hat \beta)$ and being shifted by $\mathbf X (\hat \beta - \beta^*)$.
	Then, $
		L( Y, (\beta^* , \hat \sigma^2(\beta^*)) ) =
		L( Y', (\hat \beta, \hat \sigma^2(\hat \beta)) )$.
	As the likelihood for any observation is retained by this transformation, also $
		F^{\hat \sigma^2(\beta^*)}_{\beta^*}(T^{\hat \sigma^2(\beta^*)}_{Y, \beta^*}) =
		F^{\hat \sigma^2(\hat \beta)}_{\hat \beta}(T^{\hat \sigma^2(\hat \beta)}_{Y', \hat \beta})$.
	The distribution function $F^{\sigma^2}_{\beta}(t(\mathbf Y))$ is a radial function in $\beta$ and can be evaluated by the integral $1 - C \int_0^R \varphi_{\sigma^2}(r)r^{N - 1}dr$, where $\varphi$ is the density of the normal distribution with mean zero and variance $\sigma^2$, $C$ is a normalizing constant and $R = \|Y - \beta\|$ ($R = \varphi_{\sigma^2}^{-1}(t), t(\mathbf Y) = \varphi_{\sigma^2}(\|Y - \beta\|)$). Therefore, $F^{\sigma^2}_{\beta}(t)$ is strictly, monotonously decreasing in $t$ but $\| Y' - \hat \beta \| > \| Y - \hat \beta \|$, a contradiction.

\end{proof}

\subsubsection{Proof of lemma \ref{lemma:profile}}

\begin{proof}
	\begin{align*}
		\pl^{*w, \hat \sigma^2}_{Y, \beta} (A)
		= \int_{w(\mathbf Y') \le w(\mathbf Y)}
			T^{w}(\mathbf Y', \beta) dP^{\mathbf Y'|\beta^*},
	\end{align*}
	where $P^{\mathbf Y'|\beta^*}$ is the conditional distribution of $\mathbf Y$ given $\mathbf Y \in \{\mathbf Y'|\beta^*(\mathbf Y') = \beta^*\}$. Here, $\beta^*(\mathbf Z)$ denotes the plausibility estimate for $\mathbf Z$. Wlog let $\mathbf Y$ be standardized, so that the $\mathbf Y'$ in the integral above are uniform on the $S^{N - 1}$ sphere. We can now invoke lemma \ref{lemma:W} for the single distribution  $P^{\mathbf Y'|\beta^*}$. $\pl^{*w, \hat \sigma^2}_{Y, \beta} (A)$ is therefore stochastically larger than uniform for any profile-plausibility estimate.
\end{proof}

\subsection{R-package}\label{app:r}

We provide an R-package at \href{https://github.com/sboehringer/plausibility}{https://github.com/sboehringer/plausibility}. An example, calculating a model comparison for the R-data set {\em mtcars} is shown below.

\begin{verbatim}
	library('plausibility')
	data('mtcars')
	plGlm = new('plausibilityBinomial', am ~ 1, am ~ ., mtcars, Nsi = 1e3L);
	plausibility(plGlm, optMethod = 'optim');
\end{verbatim}

\bibliographystyle{unsrt}  
\bibliography{literature-sb}

\end{document}